\journal{Journal of Linear Algebra and its Applications}
\newtheorem{definition}{Definition}[section]
\newtheorem{lemma}[definition]{Lemma}
\newtheorem{proposition}[definition]{Proposition}
\newtheorem{theorem}[definition]{Theorem}
\def\iddots{\mathinner{\mkern1mu\raise\p@
\vbox{\kern7\p@\hbox{.}}\mkern2mu
\raise4\p@\hbox{.}\mkern2mu\raise7\p@\hbox{.}\mkern1mu}}
\newcommand{\PSres}{{\operatorname{PSres}}}
\newcommand{\Sres}{{\operatorname{Sres}}}
\newcommand{\Res}{{\operatorname{Res}}}
\newcommand{\Syl}{{\operatorname{Syl}}}
\newcommand{\coeff}{{\operatorname{coeff}}}
\newcommand{\chara}{{\operatorname{char}}}
\def\K{{\mathbb K}}
\def\cF{{\mathcal F}}
\def\cG{{\mathcal G}}
\def\N{{\mathbb N}}
\def\Q{{\mathbb Q}}
\def\Z{{\mathbb Z}}
\def\twoFone#1#2#3#4{{_2F_1}\biggl(\begin{matrix}
  {#1}\kern.707em {#2}\\{#3}
\end{matrix}\,\bigg|\,#4\biggr)}
\newenvironment{proof}{
  \trivlist \item[\hskip \labelsep{\it Proof.}]}{\hfill\mbox{$\Box$}
  \endtrivlist}
\begin{document}

\begin{frontmatter}

\title{Subresultants in multiple roots: an extremal case}

\author{A.~Bostan}
\address{Inria, Universit\'e Paris-Saclay, 1 rue Honor\'e d'Estienne d'Orves, 91120 Palaiseau, France.}
\ead{alin.bostan@inria.fr}
\ead[url]{http://specfun.inria.fr/bostan}

\author{C.~D'Andrea}
\address{ Departament de Matem\`atiques i Inform\`atica,
 Universitat de Barcelona (UB),
 Gran Via de les Corts Catalanes 585,
 08007 Barcelona,
 Spain} \ead{cdandrea@ub.edu}
\ead[url]{http://atlas.mat.ub.es/personals/dandrea}

\author{T.~Krick}
\address{Departamento de Matem\'atica, Facultad de
Ciencias Exactas y Naturales  and IMAS,
CONICET, Universidad de Buenos Aires,  Argentina} \ead{krick@dm.uba.ar}
\ead[url]{http://mate.dm.uba.ar/\~{}krick}

\author{A.~Szanto}
\address{Department of Mathematics, North Carolina State
University, Raleigh, NC 27695 USA}
\ead{aszanto@ncsu.edu}
\ead[url]{www4.ncsu.edu/\~{}aszanto}

\author{M.~Valdettaro}
\address{Departamento de Matem\'atica, Facultad de
Ciencias Exactas y Naturales, Universidad de Buenos Aires, Argentina}
\ead{mvaldettaro@gmail.com}

\begin{abstract} We provide explicit formulae for the coefficients of the
order-$d$ polynomial subresultant of $(x-\alpha)^m$ and $(x-\beta)^n$ with
respect to the set of Bernstein polynomials $\{(x-\alpha)^j(x-\beta)^{d-j}, \,
0\le j\le d\}$. They are given by hypergeometric expressions arising from
determinants of binomial Hankel matrices.
\end{abstract}

\begin{keyword}
Subresultants, Hankel matrices, Ostrowski's determinant,
 Pfaff-Saalsch\"utz identity.

\MSC[2010] 13P15 \sep  15B05 \sep 33C05
\end{keyword}
\date{\today}
\end{frontmatter}

\section{Introduction}

Let $\K$ be a field, and $f=f_mx^m+\cdots + f_0$ and $g=g_n x^n+\cdots +
g_0$ be two polynomials in $\K[x]$ with $f_m\neq0$ and $g_n\neq 0$. Set $0\le
d< \min\{m,n\}$. The {\em order-$d$ subresultant} $\Sres_d(f,g)$ is the
polynomial in $\K[x]$ defined as
\begin{equation}\label{srs}
\Sres_d(f,g):= \det \begin{array}{|cccccc|c}
\multicolumn{6}{c}{\scriptstyle{m+n-2d}}&\\
\cline{1-6}
f_{m} & \cdots & &\cdots &f_{d+1-(n-d-1)} &x^{n-d-1}f& \\
&  \ddots & &&\vdots  & \vdots& \scriptstyle{n-d}\\
& & f_m& \dots &f_{d+1}&f& \\
\cline{1-6}
g_{n} &\cdots & &\cdots  &g_{d+1-(m-d-1)}  &x^{m-d-1}g&\\
&\ddots && &\vdots  &\vdots  &\scriptstyle{m-d}\\
&&g_{n} & \cdots &  g_{d+1} &g&\\
\cline{1-6} \multicolumn{2}{c}{}
\end{array},
\end{equation}
where, by convention, $f_\ell = g_\ell =0$ for $\ell<0$.

Although it is not immediately transparent from the definition, $\Sres_d(f,g)$
is a polynomial of degree at most~$d$, whose coefficients are equal to some
minors of the Sylvester matrix of $f$ and $g$. Subresultants were implicitly
introduced by Jacobi~\cite{jacobi} and explicitly by
Sylvester~\cite{sylv39,sylv40}, see~\cite{GaLu03} for a comprehensive
historical account\footnote{The Sylvester matrix was defined in~\cite{sylv40},
and the order-$d$ subresultant was introduced in~\cite{sylv39,sylv40} under
the name of ``prime derivative of the $d$-degree''.}.

For any finite subsets $A=\{\alpha_1,\dots,\alpha_m\}$ and
$B=\{\beta_1,\dots,\beta_n\}$ of~$\K$, and for $0\le p\le m,\,0\le q\le n,$
one can define after Sylvester~\cite{sylv40b} the {\em double sum} expression:
\[ \Syl_{p,q}(A,B)(x):=\sum_{\substack{A^{\prime }\subset
A,\,B^{\prime}\subset
B\\|A^{\prime}|=p,\,|B^{\prime}|=q}}\frac{\mathcal{R}(A^{\prime},B^{\prime})\,
\mathcal{R}(A\backslash A^{\prime},B\backslash
B^{\prime})}{\mathcal{R}(A^{\prime},A\backslash A^{\prime
})\,\mathcal{R}(B^{\prime},B\backslash B^{\prime})}\,{\mathcal{R}(x,A^{\prime
})\,\mathcal{R}(x,B^{\prime})},\] where $ \mathcal{R}(Y,Z):=\prod_{y\in Y}
\prod_{z\in Z}(y-z). $

Sylvester stated in~\cite{sylv40b}, then proved in~\cite[Section II]{sylv},
the following connection between subresultants and double sums: assume that
$d=p+q,$ and suppose that $f$ and $g$ are the square-free polynomials
$$f=(x-\alpha_1)\cdots (x-\alpha_m) \quad \text{and} \quad g=(x-\beta_1)\cdots
(x-\beta_n).$$ Then, $${d\choose
p}\Sres_{d}(f,g)=(-1)^{p(m-d)}\Syl_{p,q}(f,g).$$

This identity can be regarded as a generalization to subresultants of the
famous Poisson formula~\cite{poisson} for the resultant of $f$ and $g$:
\begin{equation}\label{ess}
\Res(f,g)= \prod_{i=1}^m\prod_{j=1}^n(\alpha_i-\beta_j).
\end{equation}

We note however that the Poisson  formula also holds when~$f$ or $g$ have
multiple roots, since it does not involve denominators in terms of differences
of roots in subsets of $A$ or in subsets of $B$.
To demonstrate the challenges in finding closed formulae for subresultants
in the most general case, consider the instance when
$f=(x-\alpha_1)^{m_1}\cdots (x-\alpha_r)^{m_r} $, $
g=(x-\beta_1)^{n_1}\cdots (x-\beta_s)^{n_s}$ with $\alpha_i\ne
\alpha_j,\,\beta_k\ne \beta_\ell$ and $d=1$. A (quite intricate) closed formula for
$\Sres_1(f,g)$ appears in \cite[Th.2.7]{DKS13} and has the form:
{\footnotesize
\begin{align*}\Sres_1({f},{g})&=\sum_{i=1}^{
r}(-1)^{m-m_i} \Big(\prod_{\tiny \begin{array}{c}1\le j\le r\\j\neq
i\end{array}}  \frac{g(\alpha_j)^{m_j}}{
(\alpha_i-\alpha_j)^{m_j}}\Big) g(\alpha_i)^{m_i-1}
  \Big((x-\alpha_i)\cdot \\ &\sum_{\tiny\begin{array}{c}k_1+\cdots
  +\widehat k_i+\cdots \\ \cdots +k_{r+s}=m_i-1\end{array}}\prod_{\tiny \begin{array}{c}1\le j\le r\\j\neq
i\end{array}}
  \frac{{m_j-1+k_j\choose
  k_j}}{(\alpha_i-\alpha_j)^{k_j}}\prod_{1\le \ell\le s}\frac{{n_\ell-1+k_{r+\ell}\choose
  k_{r+\ell}}}{
  (\alpha_i-\beta_\ell)^{k_{r+\ell}}}\\
  &+ \min\{1,m_i-1\} \sum_{\tiny\begin{array}{c}k_1+\cdots
  +\widehat k_i+\cdots \\ \cdots +k_{r+s}=m_i-2\end{array}}\prod_{\tiny \begin{array}{c}1\le j\le r\\j\neq
i\end{array}}
  \frac{{m_j-1+k_j\choose
  k_j}}{(\alpha_i-\alpha_j)^{k_j}}\prod_{1\le \ell\le s}\frac{{n_\ell-1+k_{r+\ell}\choose
  k_{r+\ell}}}{
  (\alpha_i-\beta_\ell)^{k_{r+\ell}}}\Big).
  \end{align*}}

This is a nontrivial expression, and nothing similar has been found yet for
subresultants of general orders. It is worth noticing, however, that
determinantal formulations for subresultants of square-free polynomials
readily generalize to the case of polynomials with multiple roots (see
\cite[Th.2.5]{DKS13}), so that the difficulty seems to lie in finding expanded
expressions.

\medskip
In this article we take a completely different approach and focus
on an extremal case, which is when both $f$ and $g$ have only one multiple
root each: we get explicit expressions for $\Sres_d((x-\alpha)^m,(x-\beta)^n)$
for all $d<\min\{m,n\}$.

To do this, we set $0< d<\min\{m,n\}$ or $d=\min\{m,n\}$ when $m\ne n$ and
$c=c(m,n,d ) := m+n-2d-1$. We introduce the $d\times (d+1)$ integer Hankel
matrix with binomial entries by
{ \begin{eqnarray*}\label{Hankel}
H(m,n,d)&:=&\left(\binom{c}{m-i-j}\right)_{\substack{1\le i\le d\\0\le j\le d}}\\&=& \left(\begin{array}{ccccc}
\binom{c}{m-1}& \binom{c}{m-2}&\dots & \dots & \binom{c}{m-d-1}\\
\binom{c}{m-2}& & \iddots&\iddots & \binom{c}{m-d-2}\\
\vdots & \iddots&\iddots  & &\vdots\\
\binom{c}{m-d}&\binom{c}{m-d-1} &\dots &\dots &  \binom{c}{m-2d}
\end{array} \right),\nonumber
\end{eqnarray*}}
where, by convention, $\binom{c}{k}=0$ for $k<0$ and for $k>c$.

Denote with $q_j(m,n,d)$ the $j$-th maximal minor of $H(m,n,d)$ defined as
the determinant of the square submatrix $H_j(m,n,d)$ of $H(m,n,d)$ obtained by
deleting its $(j+1)$-th column, for $0\le j\le d.$  By convention, $q_0(m,n,0)$, the determinant of an empty matrix,
equals $1$.

Clearly, all $q_j(m,n,d)$ are integer numbers. To regard them as elements of
the field~$\K$, we consider their class via the natural ring homomorphism
$\Z\to \K$ which maps the integer 1 to the unit $1_\mathbb{K}$ of $\K$.

\medskip We now describe our main result, which provides a closed-form
expression for the coefficients of the subresultant
$\Sres_d((x-\alpha)^m,(x-\beta)^n)$ when expressed in the set of Bernstein polynomials
$\left\{ (x-\alpha)^j(x-\beta)^{d-j}, \, 0\leq j \leq d \right\}$.

\begin{theorem} \label{main} Let $m, n, d\in \N$ with $0\le d < \min\{m,n\},$ and  $\alpha,\beta\in \K$. Then,
{\small \begin{equation*}
\Sres_d((x-\alpha)^m,(x-\beta)^n)={(-1)}^{\binom{d}{2}}(\alpha-\beta)^{(m-d)(n-d)} \sum_{j=0}^d q_j(m,n,d)(x-\alpha)^j(x-\beta)^{d-j}.
\end{equation*}}
\end{theorem}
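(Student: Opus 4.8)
The plan is to evaluate the determinant~\eqref{srs} directly, reduce $\Sres_d$ to an explicit representation $Uf+Vg$ with binomial coefficients, and then recognise the cofactor data $(U,V)$ — and hence the Bernstein expansion — through classical binomial determinant identities. First I would use that subresultants are covariant under translation of the variable: $\Sres_d((x-\alpha)^m,(x-\beta)^n)$ and the claimed right-hand side transform in the same way under $x\mapsto x-\alpha$, so one may assume $\alpha=0$; moreover $\alpha=\beta$ makes both sides vanish ($\gcd$ has degree $\min\{m,n\}>d$, while $(\alpha-\beta)^{(m-d)(n-d)}=0$), so assume also $\beta\neq0$. With $f=x^m$ only $f_m=1$ survives, so the first $n-d$ rows of~\eqref{srs} form the block $\bigl(I_{n-d}\mid 0\mid\mathbf f\bigr)$ with $\mathbf f=(x^{m+n-d-1},\dots,x^{m})^{t}$; clearing the left part of the $g$-rows with these rows and expanding by Laplace along the first $n-d$ columns gives $\Sres_d(x^m,(x-\beta)^n)=\det(B\mid\mathbf r)$, where $B$ is the $(m-d)\times(m-d-1)$ Toeplitz matrix with entries $\binom{n}{d+k-t}(-\beta)^{n-d-k+t}$ and $\mathbf r_k$ is the part of $x^{m-d-k}(x-\beta)^{n}$ of degree $<m$. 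Expanding this determinant along $\mathbf r$ and splitting off the part of $\mathbf r_k$ of degree $\ge m$ yields
\begin{equation*}
\Sres_d(x^m,(x-\beta)^n)=U(x)\,x^{m}+V(x)\,(x-\beta)^{n},\qquad V(x)=\sum_{k=1}^{m-d}(-1)^{k+m-d}\det(\widehat B_k)\,x^{m-d-k},
\end{equation*}
where $\widehat B_k$ is $B$ with its $k$-th row deleted, $\deg U\le n-d-1$ and $\deg V\le m-d-1$.

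Since $\Res(x^m,(x-\beta)^n)=(-\beta)^{mn}\neq0$, the map $(A,C)\mapsto Ax^{m}+C(x-\beta)^{n}$ is injective on pairs with $\deg A\le n-d-1$, $\deg C\le m-d-1$; hence, setting $P(x):=\sum_{j=0}^{d}q_j(m,n,d)\,x^{j}(x-\beta)^{d-j}$ and $\gamma:=(-1)^{\binom{d}{2}}(\alpha-\beta)^{(m-d)(n-d)}$, it is enough to show that $\gamma P$ admits a representation $\gamma P=Ax^{m}+C(x-\beta)^{n}$ with those degree bounds and with $C=V$ (then $\gamma P-\Sres_d=(A-U)x^m$ has degree $<m$, so $A=U$ and $\gamma P=\Sres_d$). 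Working in $\K[[x]]$ (where $x-\beta$ is a unit), $P(x-\beta)^{-n}=\sum_{j}q_j\,x^{j}(x-\beta)^{d-j-n}$ and, expanding, its coefficient of $x^{a}$ is $(-1)^{d-n}\beta^{\,d-n-a}\sum_{j}(-1)^{j}q_j\binom{n-d+a-1}{a-j}$. Thus the required representation, together with the equality $C=V$, amounts to the two families of identities
\begin{equation*}
\sum_{j=0}^{d}(-1)^{j}q_j(m,n,d)\binom{n-d+a-1}{a-j}=0\ \ (m-d\le a\le m-1),\qquad
(-1)^{a}\det(\widehat B_{m-d-a})=\gamma\,[x^{a}]\bigl(P(x-\beta)^{-n}\bigr)\ \ (0\le a\le m-d-1).
\end{equation*}

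For the first family, recall $c=m+n-2d-1$. Bordering $H(m,n,d)$ by a copy of one of its rows produces a $(d+1)\times(d+1)$ determinant with a repeated row; expanding it shows $\sum_{j}(-1)^{j}q_j\binom{c}{m-i-j}=0$ for $1\le i\le d$, i.e.\ $\bigl((-1)^{j}q_j\bigr)_{0\le j\le d}$ lies in $\ker H(m,n,d)$. On the other hand, for $m-d\le a\le m-1$ put $p=a-(m-d)\in\{0,\dots,d-1\}$, so that $n-d+a-1=c+p$ and $a-j=m-d+p-j$; the Chu--Vandermonde identity gives $\binom{c+p}{m-d+p-j}=\sum_{\ell=0}^{p}\binom{p}{\ell}\binom{c}{m-(d-p+\ell)-j}$, which writes each row $\bigl(\binom{n-d+a-1}{a-j}\bigr)_{j}$ as a combination of the rows $i=d-p,\dots,d$ of $H(m,n,d)$ (all in $\{1,\dots,d\}$ since $p\le d-1$). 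Combining the two statements yields the first family.

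The second family equates a single maximal minor of the binomial Toeplitz matrix $B$ with a signed, $\beta$-weighted combination of maximal minors of the binomial Hankel matrix $H(m,n,d)$; after pulling the (matching) powers of $-\beta$ out of $\widehat B_{m-d-a}$ these reduce to Vandermonde/Cauchy--Binet-type binomial identities, the decisive case being $a=0$. There $[x^{0}]\bigl(P(x-\beta)^{-n}\bigr)=(-1)^{d-n}\beta^{\,d-n}q_0(m,n,d)$ and $\det(\widehat B_{m-d})=(-\beta)^{(m-d-1)(n-d)}\det\!\bigl(\binom{n}{d+k-t}\bigr)_{1\le k,t\le m-d-1}$, and one checks that all powers of $-\beta$ cancel and that $\gamma$ comes out correctly exactly when
\begin{equation*}
\det\!\left(\binom{n}{d+k-t}\right)_{1\le k,t\le m-d-1}=(-1)^{\binom{d}{2}}\det\!\left(\binom{m+n-2d-1}{m-i-j}\right)_{1\le i,j\le d}.
\end{equation*}
Establishing this identity between a size-$(m-d-1)$ Toeplitz and a size-$d$ Hankel binomial determinant — which also shows $q_0(m,n,d)\neq0$ — is where the real work lies, and I expect it to be the main obstacle: it is precisely the kind of statement handled by Ostrowski's determinant evaluation, and, once the binomial determinants (and the $q_j$ themselves) are put in closed hypergeometric form, by the Pfaff--Saalsch\"utz identity. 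Undoing the initial translation then gives the theorem.
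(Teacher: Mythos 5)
Your overall framework is sound: the translation reduction, the block computation giving $\Sres_d(x^m,(x-\beta)^n)=\det(B\mid\mathbf r)=U\,x^m+V\,(x-\beta)^n$, the uniqueness of the low-degree B\'ezout representation when $\beta\neq0$, and the first family of identities (the cofactor vector $((-1)^jq_j)_j$ lies in $\ker H(m,n,d)$, and Chu--Vandermonde expresses the rows $\bigl(\binom{n-d+a-1}{a-j}\bigr)_j$, $m-d\le a\le m-1$, in terms of rows of $H$) are all correct. But the proof is not complete: the entire quantitative content of the theorem sits in your ``second family'' $(-1)^a\det(\widehat B_{m-d-a})=\gamma\,[x^a]\bigl(P(x-\beta)^{-n}\bigr)$, $0\le a\le m-d-1$, and you do not prove it. For $a=0$ you correctly reduce it to the identity $\det\bigl(\binom{n}{d+k-t}\bigr)_{1\le k,t\le m-d-1}=(-1)^{\binom d2}\det\bigl(\binom{c}{m-i-j}\bigl)_{1\le i,j\le d}$ but explicitly leave it as ``the main obstacle''; for $1\le a\le m-d-1$ the needed identities involve all the minors $q_j$, and you neither state them in closed form nor prove them (``Vandermonde/Cauchy--Binet-type'' is a gesture, not an argument). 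Without these, nothing pins down the normalization $(-1)^{\binom d2}(\alpha-\beta)^{(m-d)(n-d)}$ or even that the right-hand side is proportional to the subresultant with the correct constant, so the argument stops exactly where the real work begins.

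For comparison, the paper avoids matching the whole cofactor $V$ coefficient by coefficient. It uses the standard fact (Lemma~\ref{lutil}) that any combination $\cF f+\cG g$ of degree at most $d$, with $\deg\cF<n-d$, $\deg\cG<m-d$, is a scalar multiple of $\Sres_d(f,g)$ once $\PSres_d(f,g)\neq0$; your first family (which is essentially the paper's Proposition~\ref{hd}) already shows $\gamma^{-1}$-free that $(\alpha-\beta)^c\sum_jq_j(x-\alpha)^j(x-\beta)^{d-j}$ is such a combination. The scalar is then determined by comparing a \emph{single} coefficient: the leading one, computed on the subresultant side by a translation plus Ostrowski's evaluation (Proposition~\ref{princ}), and on the candidate side by the column-operation identity $\sum_jq_j(m,n,d)=q_0(m+1,n,d)$ together with Lemma~\ref{lemmq0}; positive characteristic is handled afterwards by specializing the generic identity over $\Z[u_\alpha,u_\beta]$. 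Your route, if completed, would have the merit of working directly in any characteristic without that specialization step, but completing it requires precisely the Ostrowski/Pfaff--Saalsch\"utz evaluations you postponed --- either carry them out for all $a$, or shortcut to the leading-coefficient comparison as the paper does.
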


\noindent Note that Theorem \ref{main} is consistent with the Poisson formula (\ref{ess}) for $d=0$.

\smallskip Our second result completes the first one by providing explicit
expressions for the values of the minors $q_j(m,n,d),\, 0\leq j\le d$, as
products of quotients of explicit factorials.
\begin{theorem}\label{m2}
Let $m, n, d \in \N$ with $ 0<d < \min\{m,n\},$ and  $c=m+n-2d-1$. Then,
$$
q_0(m,n,d)=(-1)^{\binom{d}{2}} \displaystyle{\prod_{i=1}^{d}}\dfrac{(i-1)!\,(c+i-1)!}{(m-i-1)!(n-i)!},$$ and for
$1\le j\le d$ the following identities hold in $\mathbb{Q}$:
$$q_j(m,n,d)=
\frac{\binom{d}{j}\binom{n-d+j-1}{j}}{\binom{m-1}{j}} \, q_0(m,n,d).$$
\end{theorem}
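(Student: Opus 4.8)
The plan is to prove the two assertions separately, starting with the closed form for $q_0(m,n,d)$, which is the determinant of the $d\times d$ Hankel matrix $H_0(m,n,d)=\bigl(\binom{c}{m-i-j}\bigr)_{1\le i\le d,\,1\le j\le d}$ obtained by deleting the first column of $H(m,n,d)$. First I would recognize this as a Hankel determinant of binomial coefficients along a single row of Pascal's triangle, for which a classical evaluation due to Ostrowski (cited in the keywords) is available; alternatively, one can identify the entries $\binom{c}{m-i-j}$ as the coefficients of $(1+t)^c$ and invoke the standard determinant evaluation of Hankel matrices built from the Taylor coefficients of a function of the form $(1+t)^c$, which yields a product of factorials. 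Concretely, I would either quote the relevant special case of the Lindström--Gessel--Viennot / Jacobi--Trudi type formula for binomial Hankel determinants, or reduce to it by elementary row and column operations on $H_0$ (using the Pascal recurrence $\binom{c}{k}=\binom{c-1}{k}+\binom{c-1}{k-1}$) to bring it to a triangular or lower-order form; carefully tracking the sign produced by the anti-diagonal reversal needed to turn the Hankel matrix into a more standard shape accounts for the factor $(-1)^{\binom d2}$. The bookkeeping of shifted factorial arguments so that the product telescopes into $\prod_{i=1}^d \frac{(i-1)!\,(c+i-1)!}{(m-i-1)!\,(n-i)!}$ is the most error-prone part, but it is essentially routine once the right normalization is chosen.

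For the second assertion, I would compute the ratio $q_j(m,n,d)/q_0(m,n,d)$ directly. The minor $q_j$ is the determinant of $H_j(m,n,d)$, the matrix with the $(j+1)$-th column of $H(m,n,d)$ removed; the columns of $H(m,n,d)$ are indexed $0,\dots,d$ and column $k$ has entries $\binom{c}{m-i-k}$, which are again the coefficients of $(1+t)^c$ shifted. The key observation is that all the $q_j$ are the maximal minors of one fixed $d\times(d+1)$ matrix, so they are (up to sign) the Plücker coordinates of a single point in the Grassmannian, and any polynomial identity among binomial coefficients that exhibits a linear relation among the columns of $H(m,n,d)$ translates into a linear relation among the $q_j$ with sign $(-1)^j$. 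I would therefore look for a generating-function or hypergeometric identity — this is exactly where the Pfaff--Saalschütz identity named in the keywords should enter — expressing the ratio $q_j/q_0$ as a terminating ${}_2F_1$ or ${}_3F_2$ evaluated at a point where Pfaff--Saalschütz applies, giving the claimed closed product $\binom dj\binom{n-d+j-1}{j}/\binom{m-1}{j}$.

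An alternative, perhaps cleaner route for the second part is induction on $j$: expand the determinant defining $q_{j}$ (or $q_{j-1}$) along the column that distinguishes it from $q_{j-1}$, using the Pascal recurrence to relate columns $k$ and $k+1$ of $H$, and derive a three-term recurrence of the shape $a_j\, q_j + b_j\, q_{j-1} = 0$ with explicit rational $a_j,b_j$; solving this recurrence with initial value $q_0$ and checking that the telescoped product matches $\frac{\binom dj\binom{n-d+j-1}{j}}{\binom{m-1}{j}}$ finishes the proof. In either approach the main obstacle is the same: producing the exact linear relation among consecutive maximal minors of the binomial Hankel matrix, which amounts to a nontrivial hypergeometric summation; once that relation is in hand, both the ratio formula and (via $j=0$ as an anchor together with the Ostrowski-type evaluation) the whole theorem follow by elementary manipulation. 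I would also verify the formulas against the small cases $d=1$ and $d=2$ as a consistency check before writing out the general argument, and confirm compatibility with Theorem \ref{main} by recovering known values such as $\Res(f,g)$ when $d=0$ and the $d=\min\{m,n\}$ boundary case.
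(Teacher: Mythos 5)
Your proposal takes essentially the same route as the paper: the paper evaluates $q_0(m,n,d)$ via Ostrowski's binomial Hankel determinant formula (Lemma~\ref{ost}), and obtains the ratios $q_j/q_0$ by exhibiting, through the Pfaff--Saalsch\"utz identity (Lemma~\ref{PS}), an explicit alternating-sign vector in the kernel of $H(m,n,d)$, which must be proportional to the vector of signed maximal minors because $q_0\neq 0$ makes the kernel one-dimensional and $k_0=1$ fixes the scalar. This is precisely Lemmas~\ref{lemmq0} and~\ref{kjmnd}; when writing it up, just make that full-rank and normalization step explicit.
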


The proof of Theorem~\ref{main} yields as a byproduct (see
Proposition~\ref{princ}) a nice description of
the $d$-th principal subresultant
$\PSres_d((x-\alpha)^m,(x-\beta)^n)$, that is, of the coefficient of $x^d$ in
$\Sres_d((x-\alpha)^m,(x-\beta)^n)$:
\begin{equation}\label{eq:PRes} \PSres_d((x-\alpha)^m,(x-\beta)^n)
= (\alpha-\beta)^{(m-d)(n-d)}\,\prod_{i=1}^{d}\frac{(i-1)!\,(c+i)!}{(m-i)!(n-i)!}.
\end{equation}

The product in~\eqref{eq:PRes} is an integer number whose prime factors are
less than $m+n-d$. Thus, if $\alpha\ne \beta$ and if the characteristic of
$\K$ is either zero or at least equal to~$m+n-d$, the subresultant $\Sres_d(
(x-\alpha)^m,(x-\beta)^n)$ is a polynomial of degree exactly~$d$. When
$\chara(\K)$ is positive but smaller than $m+n-d$, this is generally not true
(though exceptions exist, e.g., for $m=5, n=3, d=2$ and $\chara(\K)=3$). The
change of behavior might be very radical. For instance, there exist triples
$(m, n, d)$ for which the degree of $\Sres_d((x-\alpha)^m,(x-\beta)^n)$ is
less than~$d$ for any positive characteristic $p<m+n-d$. Such an example is
$(m, n, d) = (6, 8, 2)$. Another interesting example is when $p=m+n-d-1$: in
that case, $\Sres_d((x-\alpha)^m,(x-\beta)^n)$ reduces to a constant in
characteristic~$p$. In general, the degree of
$\Sres_d((x-\alpha)^m,(x-\beta)^n)$ can be determined using Theorem~\ref{m2}.
For example, in characteristic $5$, the order-8 subresultant of
$(x-\alpha)^{11}$ and $(x-\beta)^9$ is a polynomial of degree~$6$ for all
$\alpha \neq \beta$.

We briefly sketch our proof strategy for these results. We start
from the basic fact that if $\Sres_d((x-\alpha)^m,(x-\beta)^n)$ has degree exactly $d$, then any
 linear combination ${\cF}\cdot (x-\alpha)^m+{\cG}\cdot (x-\beta)^n$,
of degree bounded by~$d$ with $\deg(\cF)<n-d$ and $\deg(\cG)<m-d$, is a scalar
multiple of the subresultant (Lemma~\ref{lutil}). In Proposition \ref{hd} we
show that $\sum_{j=0}^d q_j(m,n,d)(x-\alpha)^j(x-\beta)^{d-j}$
can be expressed as such a linear combination, and determine the scalar
multiple which is the ratio between this expression and the subresultant.
Theorem~\ref{main} then follows by specializing the ``generic case''
(in characteristic zero) to fields of positive characteristic.

To prove Theorem \ref{m2}, we proceed in two main steps. We first evaluate
$q_0(m,n,d)$ in Lemma \ref{lemmq0} by using a result due to Ostrowski
(Lemma~\ref{ost}) for the determinant evaluation of a Hankel matrix involving
binomial coefficients. Then, in Lemma~\ref{kjmnd} we reduce the computation of
the remaining $q_j(m,n,d), \, 1\leq j\leq d$, to a hypergeometric identity due
to Pfaff and Saalsch\"utz (Lemma~\ref{PS}).

Sylvester's original motivation for deriving expressions in roots for
subresultants was to understand how Sturm's method for computing the number of
real roots of a polynomial in a given interval works formally; see
\cite{sylv}, where Sylvester applies the theory of subresultants developed
there to the case when $g=f',$ with $f$ having simple roots. Furthermore,
Sylvester's formulae opened the door to have great flexibility in the
evaluation of resultants and subresultants (see the book of Jouanolou and
Ap\'ery~\cite{AJ} for several ingenious formulae for the simple roots case).
The search of explicit expressions for subresultants of polynomials having
multiple roots is an active area of research; see for instance \cite{Hon99,
LP03, DHKS07, DHKS09,RS11,DKS13}. The quest for such formulae has uncovered
some interesting connections of subresultants to other well-known objects and
thus several new applications were discovered.

One of these applications are closed expressions for various rational
interpolation problems, including the Cauchy interpolation or the osculatory
rational interpolation problem~\cite{BL00,DKS15}. The search for formulae in
multiple roots also uncovered the close connection of subresultants to
multivariate symmetric Lagrange interpolation~\cite{KSV17}; generalizations to
symmetric Hermite interpolation are the topic of ongoing research. These
formulae in roots may be used to analyze the vanishing of the coefficients of
the subresultants (see the discussion after Theorem \ref{m2}), a question
related to the understanding of the performance of the Euclidean Algorithm for
polynomials over finite fields~\cite{MG1990}. Our closed formulae also led us
think about accelerating the computation of the subresultants in our
particular extremal case; this will be explained in detail in a forthcoming
paper.

\smallskip The paper is organized as follows: We first derive Theorem~\ref{m2}
in Section~\ref{2} thanks to Lemmas \ref{lemmq0} and \ref{kjmnd}. Section
\ref{3} then introduces the aforementioned multiple of the subresultant and
proves Theorem~\ref{main}.

\subsection*{Acknowledgements} This project started when the last four authors
met at the FoCM Conference in Montevideo in December 2014 and at the
University of Buenos Aires in September 2015. We are grateful to Professor
Richard A. Brualdi, who suggested a joint collaboration with the first author.
 Carlos D'Andrea was partially supported by ANPCyT
PICT-2013-0294, and the MINECO research project MTM2013-40775-P, Teresa Krick and Marcelo Valdettaro
are partially supported by ANPCyT PICT-2013-0294 and UBACyT
2014-2017-20020130100143BA, and Agnes Szanto was partially supported by NSF
grant CCF-1217557.

\bigskip \section{Proof of Theorem~\ref{m2}}\label{2} All along this section,
we work over the rational numbers to compute the coefficients
$q_j(m,n,d)$ which appear in the expression of the subresultant given
in Theorem \ref{main} over a field $\K$ of characteristic zero. As these
numbers are integers, we can regard them as elements of any field~$\K$ via
the natural ring homomorphism $\Z\to\K$ which maps $1_\mathbb{Z}\mapsto
1_\mathbb{K}.$

We start by recalling Ostrowski's determinant evaluation (Lemma \ref{ost}) for
Hankel matrices with binomial coefficients entries, and the Pfaff-Saalsch\"utz
identity (Lemma~\ref{PS}) for the evaluation at the point 1 of a special
family of $_3F_2$ hypergeometric functions.

\begin{lemma}[\cite{Ost1964}]\label{ost}
For $\ell,k\in\N$ and $a_0, a_1, \ldots, a_k\in \N$,
 \begin{equation*}
 \det\left(\binom{\ell}{a_i-j}\right)_{\substack{0\le i,j\le k}}=\ell!^{k+1}\frac{\prod_{i=1}^{k}(\ell+i)^{k+1-i}\prod_{0\le i< i'\le k}(a_{i'}-a_i)  }{\prod_{i=0}^k a_i! \prod_{i=0}^k (\ell+k-a_i)! }.
 \end{equation*}
\end{lemma}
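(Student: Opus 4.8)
The plan is to strip from this matrix a diagonal factor so that what remains is a (generalized) Vandermonde matrix, and then to identify the leftover scalar by a convenient specialization of $a_0,\dots,a_k$. The starting point is the entry-wise factorization, valid for every $t\in\N$ and every $0\le j\le k$ under the usual conventions that $\binom{\ell}{r}=0$ for $r<0$ or $r>\ell$ and that reciprocals of factorials of negative integers vanish:
\[
\binom{\ell}{t-j}=\frac{\ell!}{t!\,(\ell+k-t)!}\,P_j(t),\qquad P_j(t):=\bigl(t(t-1)\cdots(t-j+1)\bigr)\bigl((\ell+k-t)(\ell+k-1-t)\cdots(\ell+j+1-t)\bigr),
\]
where $P_j$ is a polynomial in $t$ of degree exactly $k$ (a product of $k$ linear factors) depending only on $\ell,k,j$. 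I would prove this by treating separately the range $j\le t\le\ell+j$, where both sides equal $\ell!/\bigl((t-j)!\,(\ell-t+j)!\bigr)$, and the complementary cases $t<j$ and $t>\ell+j$, where one of the two blocks of linear factors of $P_j$ (or else the prefactor $\ell!/(t!\,(\ell+k-t)!)$ itself) vanishes, in agreement with the convention that the binomial is $0$.

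Setting $t=a_i$ realizes the matrix $\bigl(\binom{\ell}{a_i-j}\bigr)_{0\le i,j\le k}$ as $D\cdot\bigl(P_j(a_i)\bigr)_{0\le i,j\le k}$, with $D$ the diagonal matrix of entries $\ell!/(a_i!\,(\ell+k-a_i)!)$. Writing each $P_j$ in the monomial basis, $P_j(t)=\sum_{r=0}^{k}c_{rj}t^{r}$, splits the remaining factor as $\bigl(P_j(a_i)\bigr)_{i,j}=(a_i^{r})_{0\le i,r\le k}\,(c_{rj})_{0\le r,j\le k}$, whence
\[
\det\left(\binom{\ell}{a_i-j}\right)_{0\le i,j\le k}=\left(\prod_{i=0}^{k}\frac{\ell!}{a_i!\,(\ell+k-a_i)!}\right)\left(\prod_{0\le i<i'\le k}(a_{i'}-a_i)\right)\det C,
\]
where $C=(c_{rj})$ is the coefficient matrix of $P_0,\dots,P_k$. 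The crucial point is that $\det C$ depends only on $\ell$ and $k$, not on the $a_i$.

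To pin down $\det C$ I would specialize the identity just obtained to $a_i=i$: then $\binom{\ell}{i-j}=0$ for $j>i$ while $\binom{\ell}{0}=1$, so the matrix on the left is lower triangular with unit diagonal and has determinant $1$. Solving for $\det C$ and simplifying with $\prod_{0\le i<i'\le k}(i'-i)=\prod_{j=1}^{k}j!$ and $\prod_{i=0}^{k}(\ell+k-i)!=\ell!^{k+1}\prod_{i=1}^{k}(\ell+i)^{k+1-i}$ gives $\det C=\prod_{i=1}^{k}(\ell+i)^{k+1-i}$. Substituting this back into the displayed factorization of the previous paragraph produces exactly the asserted formula.

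No heavy computation is involved; the two points that require care are (i) verifying the entry-wise factorization against all the boundary conventions, and (ii) noticing that after that factorization all the $P_j$ have the \emph{same} degree $k$ --- so a naive generalized-Vandermonde argument is not enough and one genuinely needs the auxiliary scalar $\det C$, which is cleanest to obtain not by expanding its coefficient matrix but through the triangular specialization $a_i=i$. As an alternative, the identity also follows from the Lindstr\"om--Gessel--Viennot lemma applied to suitable families of non-intersecting lattice paths, but the algebraic route sketched above seems the shortest.
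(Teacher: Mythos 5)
Your proposal is correct. Note, though, that the paper itself offers no proof of this lemma: it is quoted verbatim from Ostrowski's 1964 article, so there is no internal argument to compare against; what you have supplied is a self-contained proof of the cited result. Your route is the standard ``determinant evaluation'' technique (extract the diagonal factor $\ell!/(a_i!\,(\ell+k-a_i)!)$, recognize the remaining matrix as a Vandermonde matrix times a coefficient matrix $C$ independent of the $a_i$, then determine $\det C$ by a specialization where the original determinant is triangular), and every step checks out: the entrywise factorization with $P_j$ of degree $k$ is valid in all ranges of $t$ under the stated conventions, the splitting $\bigl(P_j(a_i)\bigr)=(a_i^r)(c_{rj})$ is a legitimate matrix factorization, the specialization $a_i=i$ does give a unitriangular matrix of determinant $1$, and the bookkeeping $\prod_{0\le i<i'\le k}(i'-i)=\prod_{j=1}^k j!$ and $\prod_{i=0}^k(\ell+k-i)!=\ell!^{k+1}\prod_{i=1}^k(\ell+i)^{k+1-i}$ yields $\det C=\prod_{i=1}^k(\ell+i)^{k+1-i}$, which reproduces the asserted formula. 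Your observation in point (ii) is the right one: since all the $P_j$ have the same degree $k$, the naive generalized-Vandermonde argument fails and the constant $\det C$ must be pinned down separately, which your triangular specialization does cleanly. The only caveat worth recording is degenerate input: if some $a_i>\ell+k$ the formula's right-hand side involves a factorial of a negative integer and must be read with the same convention you invoke (its reciprocal is $0$), in which case both sides vanish, as they do when two $a_i$ coincide; with that reading your argument covers all $a_i\in\N$. For the application in the paper ($\ell=c$, $a_i=m-i-2$, so $\ell+k-a_i=n-d+i-1\ge 0$) no convention is needed.
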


\begin{lemma}[\cite{pfaff,Saa1890,Andrews96,Andrews97}, {\cite[\S2.3.1]{Slater66}}]\label{PS} Let
$x,y,z$ be indeterminates over $\Q$. Then, for any $k\in\mathbb{N}$,
the following identity holds in $\Q(x,y,z)$:
\begin{equation*}\sum_{j=0}^k\frac{(x)_j (y)_j
(-k)_j}{(z)_j(1+x+y-z-k)_j \, j!}= \frac{(z-x)_k(z-y)_k}{(z)_k(z-x-y)_k}.
\end{equation*}
Here $(x)_0:=1 $ and $(x)_j:=x(x+1)\cdots (x+j-1)$ for
$j\ge 1$ denotes the $j-$th Pochhammer symbol of $x$.

\end{lemma}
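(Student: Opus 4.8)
\noindent The plan is to obtain Lemma~\ref{PS} by specializing Euler's transformation of the Gauss hypergeometric series and reading it off coefficientwise. Since the sum on the left terminates --- the factor $(-k)_j$ kills every term with $j>k$ --- both sides are rational functions of $x,y,z$, so it suffices to prove the identity generically, and in particular one may work formally, with no analytic convergence hypothesis. First I would record the identity
\[
{}_2F_1(a,b;c;t)=(1-t)^{c-a-b}\,{}_2F_1(c-a,c-b;c;t)
\]
in the formal power series ring $\Q(a,b,c)[[t]]$, where $(1-t)^{c-a-b}:=\sum_{i\ge0}\frac{(a+b-c)_i}{i!}\,t^{i}$. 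Forming the Cauchy product on the right and comparing coefficients of $t^{k}$ gives
\[
\frac{(a)_k(b)_k}{(c)_k\,k!}=\sum_{j=0}^{k}\frac{(a+b-c)_{k-j}}{(k-j)!}\cdot\frac{(c-a)_j(c-b)_j}{(c)_j\,j!}.
\]

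The next step is a bookkeeping manipulation of Pochhammer symbols. From the definitions, $(\nu)_{k-j}=(-1)^j(\nu)_k/(1-\nu-k)_j$ and $1/(k-j)!=(-1)^j(-k)_j/k!$, whence $\frac{(a+b-c)_{k-j}}{(k-j)!}=\frac{(a+b-c)_k}{k!}\cdot\frac{(-k)_j}{(1-a-b+c-k)_j}$. Pulling the $j$-independent factor $(a+b-c)_k/k!$ out of the sum and dividing it into the left-hand side yields
\[
\sum_{j=0}^{k}\frac{(-k)_j\,(c-a)_j\,(c-b)_j}{(c)_j\,(1-a-b+c-k)_j\,j!}=\frac{(a)_k(b)_k}{(c)_k\,(a+b-c)_k}.
\]
The invertible substitution $x=c-a,\ y=c-b,\ z=c$ --- so that $a=z-x$, $b=z-y$, $a+b-c=z-x-y$ and $1-a-b+c-k=1+x+y-z-k$ --- turns this into precisely the asserted identity in $\Q(x,y,z)$.

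What actually requires care is the input, namely Euler's transformation at the formal level. I would get it as the composition of Pfaff's transformation ${}_2F_1(a,b;c;t)=(1-t)^{-a}\,{}_2F_1(a,c-b;c;\tfrac{t}{t-1})$ with its $a\leftrightarrow b$ mirror image ${}_2F_1(a,c-b;c;w)=(1-w)^{-(c-b)}\,{}_2F_1(c-a,c-b;c;\tfrac{w}{w-1})$: since the M\"obius map $w\mapsto\tfrac{w}{w-1}$ is an involution, applying the second to the output of the first collapses the argument back to $t$, and because $1-\tfrac{t}{t-1}=\tfrac1{1-t}$ the two scalar prefactors multiply to $(1-t)^{-a}(1-t)^{c-b}=(1-t)^{c-a-b}$. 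Pfaff's transformation itself admits a short formal proof: expand $(\tfrac{t}{t-1})^{j}=(-1)^{j}t^{j}(1-t)^{-j}$ and then $(1-t)^{-a-j}=\sum_{i\ge0}\frac{(a+j)_i}{i!}t^i$, use $(a)_j(a+j)_{k-j}=(a)_k$ and $\binom{k}{j}(-1)^j=(-k)_j/j!$, and collapse the inner sum with the terminating Chu--Vandermonde identity ${}_2F_1(-k,c-b;c;1)=(b)_k/(c)_k$, which is in turn the convolution $(1+t)^{p}(1+t)^{q}=(1+t)^{p+q}$ read in degree $k$. So the whole argument rests only on the binomial theorem.

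Two alternatives are worth noting. One can bypass Euler's transformation and prove Lemma~\ref{PS} directly by writing one Pochhammer quotient of the ${}_3F_2$ as a terminating ${}_2F_1$ at~$1$, swapping the order of summation, and resumming by Chu--Vandermonde; or, once the right-hand side has been guessed, verify the identity by creative telescoping (Zeilberger's algorithm) or by Pfaff's inductive method, which is the point of view of the works of Andrews cited above. In every version the only slightly delicate steps are the Pochhammer-reversal accounting and the correct pairing of the two Pfaff transformations; the rest is mechanical.
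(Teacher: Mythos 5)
Your proof is correct, but there is nothing in the paper to compare it against: the paper states Lemma~\ref{PS} as a classical result and simply cites Pfaff, Saalsch\"utz, Andrews and Slater, giving no proof of its own. What you supply is essentially the standard textbook derivation of the terminating Pfaff--Saalsch\"utz identity: extract the coefficient of $t^k$ from Euler's transformation ${}_2F_1(a,b;c;t)=(1-t)^{c-a-b}\,{}_2F_1(c-a,c-b;c;t)$, tidy up with the Pochhammer reversals $(\nu)_{k-j}=(-1)^j(\nu)_k/(1-\nu-k)_j$ and $1/(k-j)!=(-1)^j(-k)_j/k!$ (both of which you state correctly), and then make the invertible substitution $x=c-a$, $y=c-b$, $z=c$, which indeed produces exactly the identity of the lemma in $\Q(x,y,z)$. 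The supporting chain is also sound: Euler's transformation as the composition of the two Pfaff transformations works formally because $w\mapsto w/(w-1)$ is an involution and $1-\tfrac{t}{t-1}=\tfrac{1}{1-t}$, Pfaff's transformation reduces by coefficient extraction to the terminating Chu--Vandermonde evaluation via $(a)_j(a+j)_{k-j}=(a)_k$, and Chu--Vandermonde with generic parameters follows from the Vandermonde convolution by a polynomial-identity argument; working in $\Q(a,b,c)[[t]]$ with indeterminate parameters removes any convergence issue, and since the inner series $t/(t-1)$ has zero constant term the formal substitutions are legitimate. (The remark that both sides are rational in $x,y,z$ so it suffices to argue generically is harmless but not even needed once the parameters are indeterminates.) In short, your argument is a complete, self-contained proof resting only on the binomial theorem, which is more than the paper attempts; it is the classical route found in the cited literature, whereas the alternatives you mention (direct series rearrangement, Zeilberger, Pfaff's induction) correspond to the other references the paper lists.
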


By applying these two results, Theorem~\ref{m2} follows straightforwardly from
Lemmas \ref{lemmq0} and \ref{kjmnd} below. Lemma \ref{lemmq0} computes
$q_0(m,n,d)$ as a direct consequence of Ostrowski's determinant
evaluation. Lemma \ref{kjmnd} computes all $q_j(m,n,d)$ for $j>0$, and is a
consequence of a binomial identity (given in~\eqref{eq:binomial-PfSa}) which is, in
fact, the Pfaff-Saalsch\"utz identity in disguise. Recall that we have set $c=m+n-2d-1$.

\begin{lemma}\label{lemmq0} Let $d, m,n\in \N$ with $ 0< d < \min\{m,n\}.$  Then,
\begin{equation*}\label{qq0}
q_0(m,n,d)=(-1)^{\binom{d}{2}}\prod_{i=1}^{d}\frac{(i-1)!(c+i-1)!}{(m-i-1)!(n-i)!}.
\end{equation*}
\end{lemma}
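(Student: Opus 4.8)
The plan is to apply Ostrowski's determinant evaluation (Lemma~\ref{ost}) to the square matrix $H_0(m,n,d)$, which is obtained from $H(m,n,d)$ by deleting its first column. First I would identify $H_0(m,n,d)$ with a matrix of the form $\left(\binom{\ell}{a_i-j}\right)_{0\le i,j\le k}$ appearing in Lemma~\ref{ost}. Since $H(m,n,d)$ has entries $\binom{c}{m-i-j}$ for $1\le i\le d$, $0\le j\le d$, deleting the column $j=0$ leaves the entries $\binom{c}{m-i-j}$ for $1\le i\le d$, $1\le j\le d$. Reindexing via $i'=i-1$, $j'=j-1$ (so $0\le i',j'\le d-1$), the $(i',j')$-entry becomes $\binom{c}{(m-i'-1)-j'}$. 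Thus Ostrowski's formula applies with $k=d-1$, $\ell=c$, and $a_{i'}=m-i'-1$ for $0\le i'\le d-1$; in particular $a_0=m-1,\ a_1=m-2,\dots,a_{d-1}=m-d$.

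Next I would substitute these values into the right-hand side of Lemma~\ref{ost}:
\begin{equation*}
q_0(m,n,d)=c!^{\,d}\,\frac{\prod_{i=1}^{d-1}(c+i)^{d-i}\,\prod_{0\le i<i'\le d-1}(a_{i'}-a_i)}{\prod_{i=0}^{d-1}a_i!\,\prod_{i=0}^{d-1}(c+d-1-a_i)!}.
\end{equation*}
The three pieces then simplify as follows. Since $a_i=m-i-1$, we have $a_{i'}-a_i=i-i'$, so $\prod_{0\le i<i'\le d-1}(a_{i'}-a_i)=\prod_{0\le i<i'\le d-1}(i-i')=(-1)^{\binom{d}{2}}\prod_{0\le i<i'\le d-1}(i'-i)=(-1)^{\binom{d}{2}}\prod_{j=1}^{d-1}j!$, using the standard evaluation of the Vandermonde product over $\{0,1,\dots,d-1\}$. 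For the denominator, $\prod_{i=0}^{d-1}a_i!=\prod_{i=0}^{d-1}(m-i-1)!=\prod_{i=1}^{d}(m-i-1)!$, and $c+d-1-a_i=(m+n-2d-1)+d-1-(m-i-1)=n-d-1+i$, so $\prod_{i=0}^{d-1}(c+d-1-a_i)!=\prod_{i=0}^{d-1}(n-d-1+i)!=\prod_{i=1}^{d}(n-i)!$ after reindexing $i\mapsto d-i$.

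The only step needing a little care is the bookkeeping that recombines $c!^{\,d}$, the factor $\prod_{i=1}^{d-1}(c+i)^{d-i}$, and the leftover Vandermonde numerator $\prod_{j=1}^{d-1}j!$ into the compact product $\prod_{i=1}^{d}\frac{(i-1)!\,(c+i-1)!}{(m-i-1)!(n-i)!}$ claimed in the statement. Concretely, $c!^{\,d}\prod_{i=1}^{d-1}(c+i)^{d-i}=\prod_{i=1}^{d}(c+i-1)!$, since in $\prod_{i=1}^d (c+i-1)!$ the factor $(c+i-1)!$ contributes $c!$ together with the ascending factors $(c+1)(c+2)\cdots(c+i-1)$, and collecting how often each $(c+j)$ appears across $i=j+1,\dots,d$ gives exponent $d-j$ — matching $\prod_{i=1}^{d-1}(c+i)^{d-i}$ exactly. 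Likewise $\prod_{j=1}^{d-1}j!=\prod_{i=1}^{d}(i-1)!$. Putting the three simplified pieces together yields precisely $q_0(m,n,d)=(-1)^{\binom{d}{2}}\prod_{i=1}^{d}\frac{(i-1)!\,(c+i-1)!}{(m-i-1)!(n-i)!}$. I expect no genuine obstacle here beyond making sure the index shifts and the sign $(-1)^{\binom{d}{2}}$ are tracked correctly; the degenerate interpretation $q_0(m,n,0)=1$ (empty product) is consistent, and the hypotheses $0<d<\min\{m,n\}$ guarantee all factorials involved have nonnegative arguments and that the binomial entries used by Ostrowski's formula are the honest ones (no truncation to $0$ occurs in the relevant range).
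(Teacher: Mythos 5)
Your overall strategy is exactly the paper's (delete the first column of $H(m,n,d)$, shift to $0$-based indices, apply Ostrowski's Lemma~\ref{ost}, then regroup factorials), and your final bookkeeping identities $c!^{\,d}\prod_{i=1}^{d-1}(c+i)^{d-i}=\prod_{i=1}^{d}(c+i-1)!$ and $\prod_{0\le i<i'\le d-1}(i-i')=(-1)^{\binom{d}{2}}\prod_{i=1}^{d}(i-1)!$ are correct. However, the parameter identification is off by one, and the derivation as written only lands on the right answer because two later reindexing mistakes cancel the first error. Concretely, with $i=i'+1$, $j=j'+1$ the entry $\binom{c}{m-i-j}$ becomes $\binom{c}{m-i'-j'-2}=\binom{c}{(m-i'-2)-j'}$, so the correct choice is $a_{i'}=m-i'-2$ (i.e.\ $a_0=m-2,\dots,a_{d-1}=m-d-1$), not $a_{i'}=m-i'-1$. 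With your choice, the matrix $\bigl(\binom{c}{a_{i'}-j'}\bigr)_{0\le i',j'\le d-1}$ is the submatrix of $H(m,n,d)$ obtained by deleting the \emph{last} column, so Ostrowski would evaluate $q_d(m,n,d)$, not $q_0(m,n,d)$; already for $d=1$ one has $q_0=\binom{c}{m-2}$, whereas your $a_0=m-1$ yields $\binom{c}{m-1}$.

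The two compensating errors are in the denominator simplifications: $\prod_{i=0}^{d-1}(m-i-1)!$ equals $\prod_{i=1}^{d}(m-i)!$, not $\prod_{i=1}^{d}(m-i-1)!$ as you claim; and with your $a_{i'}$ one gets $c+d-1-a_{i'}=n-d-1+i'$, so $\prod_{i=0}^{d-1}(n-d-1+i)!=(n-d-1)!\cdots(n-2)!$, which is not $\prod_{i=1}^{d}(n-i)!=(n-d)!\cdots(n-1)!$. These two false identities exactly undo the off-by-one shift in $a_{i'}$, which is why the final formula comes out right, but the intermediate steps are invalid. The fix is simply to take $a_{i'}=m-i'-2$ (as the paper does): then $\prod_{i'}a_{i'}!=\prod_{i=1}^{d}(m-i-1)!$, $c+d-1-a_{i'}=n-d+i'$ gives $\prod_{i'}(c+d-1-a_{i'})!=\prod_{i=1}^{d}(n-i)!$, and the rest of your regrouping carries through verbatim, reproducing the paper's proof.
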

 \begin{proof}
Using Lemma \ref{ost} with $k=d-1$ and $a_i=m-i-2$ for $0\le i\le d-1$,
and $\ell = c $ we get
\begin{align*}q_0(m,n,d)&= \det\left(\binom{c}{m-i-j}\right)_{\substack{1\le i,j\le d}} = \det\left(\binom{c}{m-i-j-2}\right)_{\substack{0\le i,j\le d-1}} \\&=c!^d\frac{\prod_{i=1}^{d-1}(c+i)^{d-i}\prod_{0\le i<i'\le d-1 } (i-i')}{\prod_{i=0}^{d-1}  (m-i-2)!  \prod_{i=0}^{d-1} (c+d-1-(m-i-2))!}
\\&=\frac{\prod_{i=1}^{d} \left(c! \prod_{j=1}^{i-1} (c+j)\right) \cdot (-1)^{\binom{d}{2}} \prod_{i=1}^d (i-1)!}{\prod_{i=1}^{d} (m-i-1)! \prod_{i=1}^d (n-d+i-1)!}.
 \end{align*}
The statement follows by  rearranging terms.
\end{proof}

\begin{lemma} \label{kjmnd} Let $j, d, m,n\in \N$ with $0< j\le d < \min\{m,n\}.$ Then,
\begin{equation*}
 q_j(m,n,d)= \frac{\binom{d}{j}\binom{n-d+j-1}{j}}{\binom{m-1}{j}}\,q_0(m,n,d).
\end{equation*}
\end{lemma}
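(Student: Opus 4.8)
The plan is to exploit that $q_0(m,n,d),\dots,q_d(m,n,d)$ are, up to sign, the maximal minors of the $d\times(d+1)$ matrix $H(m,n,d)$, so that they satisfy a single homogeneous linear system which determines them up to one common scalar; combined with the value of $q_0(m,n,d)$ from Lemma~\ref{lemmq0}, this reduces the lemma to a binomial identity that is Pfaff--Saalsch\"utz (Lemma~\ref{PS}) in disguise. For each $i\in\{1,\dots,d\}$, appending to $H(m,n,d)$ a copy of its $i$-th row yields a $(d+1)\times(d+1)$ matrix with two equal rows, whose determinant vanishes; expanding it along the appended row (whose $(j+1)$-th entry is $\binom{c}{m-i-j}$, with cofactor minor $q_j(m,n,d)$ and sign $(-1)^j$) gives
\[
\sum_{j=0}^{d}(-1)^{j}\binom{c}{m-i-j}\,q_j(m,n,d)=0,\qquad 1\le i\le d.
\]
Since $q_0(m,n,d)\neq 0$ in $\Q$ by Lemma~\ref{lemmq0}, the matrix $H(m,n,d)$ has rank $d$, so the solution space of this system is one-dimensional, spanned by $(q_0,\dots,q_d)$. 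It therefore suffices to check that the vector with $j$-th entry $r_j:=\binom{d}{j}\binom{n-d+j-1}{j}\big/\binom{m-1}{j}$---an element of $\Q$ since $j\le d<m$, with $r_0=1$---satisfies the same relations; then $(q_0,\dots,q_d)=q_0\,(r_0,\dots,r_d)$, which is the assertion.

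The crux is thus the identity $\sum_{j=0}^{d}(-1)^{j}\binom{c}{m-i-j}\binom{d}{j}\binom{n-d+j-1}{j}\big/\binom{m-1}{j}=0$ for $1\le i\le d$. I would pass to Pochhammer symbols: writing $\binom{c}{m-i-j}=\binom{c}{m-i}\,(-1)^{j}(i-m)_j/(n-2d+i)_j$ (note $c-m+i+1=n-2d+i$), $\binom{d}{j}=(-1)^{j}(-d)_j/j!$, $\binom{n-d+j-1}{j}=(n-d)_j/j!$ and $\binom{m-1}{j}=(-1)^{j}(1-m)_j/j!$, the four sign factors cancel and the $j!$'s combine to a single $1/j!$, leaving
\[
\binom{c}{m-i}\sum_{j=0}^{d}\frac{(i-m)_j\,(n-d)_j\,(-d)_j}{(1-m)_j\,(n-2d+i)_j\;j!}.
\]
With $(x,y,z,k)=(i-m,\,n-d,\,1-m,\,d)$ one checks $1+x+y-z-k=n-2d+i$, so this ${}_3F_2$ at $1$ is Saalsch\"utzian, and Lemma~\ref{PS} evaluates it to
\[
\binom{c}{m-i}\,\frac{(1-i)_d\,(1-m-n+d)_d}{(1-m)_d\,(1-i-n+d)_d}.
\]
The factor $(1-i)_d=(1-i)(2-i)\cdots(d-i)$ in the numerator vanishes for every $i\in\{1,\dots,d\}$ because the term $i-i=0$ occurs, giving the required vanishing; together with the first paragraph, this proves the lemma.

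The step I expect to be the main obstacle is making this last evaluation rigorous in the degenerate range $n\le 2d-i$. There $\binom{c}{m-i}=0$, but at the same time the denominator $(n-2d+i)_j$ of the ${}_3F_2$ vanishes for $j$ large and so does $(1-i-n+d)_d$, so Lemma~\ref{PS} is being applied at a pole and the displayed closed form is a priori $0/0$. To handle this I would first drop the summands with $m-i-j<0$ (zero by the binomial convention) and re-index so as to reach a genuinely balanced, non-degenerate ${}_3F_2$; alternatively, one can argue by polynomiality in $m$---after clearing the denominator $\binom{m-1}{d}$ the binomial identity becomes a polynomial identity in $m$ (for fixed $i,d,n$, since $\binom{c}{m-i-j}=\binom{m+n-2d-1}{\,n-2d-1+i+j\,}$ has lower index independent of $m$), true for all large $m$ by the non-degenerate case and hence identically. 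The remaining manipulations are routine Pochhammer bookkeeping.
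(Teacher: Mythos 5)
Your overall strategy is the same as the paper's: you observe that the signed maximal minors span the one-dimensional kernel of $H(m,n,d)$ (your appended-row expansion is precisely the standard proof of the relations $\sum_{j=0}^d(-1)^j\binom{c}{m-i-j}q_j=0$), you reduce the lemma to showing that the explicit candidate vector satisfies the same relations, and your Pochhammer rewriting produces exactly the same Saalsch\"utzian sum $\sum_{j=0}^d\frac{(i-m)_j(n-d)_j(-d)_j}{(1-m)_j(n-2d+i)_j\,j!}$ that the paper evaluates (taking $z=1-m$ instead of $z=n-2d+i$ is immaterial). In the non-degenerate range $n\ge 2d-i+1$ your application of Lemma~\ref{PS} is legitimate and the factor $(1-i)_d$ does give the required vanishing, so up to that point the argument is correct and coincides with the paper's.

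The gap is in the degenerate range $d<n\le 2d-i$, which you rightly flag but do not actually resolve. Your fallback of polynomiality in $m$ fails: whether the specialization of Lemma~\ref{PS} is valid is governed by the sign of $n-2d+i$, a condition involving only $n,d,i$ and not $m$, so for a fixed degenerate value of $n$ there is no ``large $m$'' at which the non-degenerate computation applies, and the polynomial-in-$m$ argument has no infinite set of verified values to interpolate from. The correct variable is $n$, which is exactly the paper's route: for fixed $m,d,i$ the left-hand side is already a polynomial in $n$ (the terms with $m-i-j<0$ vanish identically, $\binom{m+n-2d-1}{m-i-j}$ has lower index independent of $n$, $\binom{n-d+j-1}{j}$ is a polynomial in $n$, and the denominators $\binom{m-1}{j}$ do not involve $n$), it vanishes for all $n\ge 2d$ by your computation, hence identically. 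Your first remedy is also not in shape: in the degenerate range the summands that vanish are those with $m-i-j>c$, i.e.\ the small indices $j\le 2d-i-n$, not those with $m-i-j<0$ as you write; dropping them and shifting the index does lead to another terminating balanced series, but you have not carried out that evaluation, so as written neither remedy closes the case $n\le 2d-i$.
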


\begin{proof}
Observe that the matrix $H$ has full rank $d$ since by Lemma \ref{lemmq0},
its minor $q_0(m,n,d)$ is non-zero. Therefore, an elementary linear algebra
argument shows that the kernel of the induced linear map $H$: $\Q^{d+1}\to
\Q^d$ has dimension $1,$ and is generated by the (non-zero) vector $${\bf
q}(m,n,d):=(q_0(m,n,d),-q_1(m,n,d),\dots,(-1)^dq_d(m,n,d)).$$ Set
$k_j(m,n,d):= \dfrac{\binom{d}{j}\binom{n-d+j-1}{j}}{\binom{m-1}{j}}$ for $0\le j\le d$. It
suffices then to show that
\begin{equation}\label{zori}
{\bf k}(m,n,d):=(k_0(m,n,d),-k_1(m,n,d),\dots,(-1)^dk_d(m,n,d))\in \ker H,
\end{equation}
so then we would have ${\bf k}(m,n,d)=\lambda \,{\bf q}(m,n,d)$ with
$\lambda=1/q_0(m,n,d),$ as $k_0(m,n,d)=1$.

Therefore, to prove \eqref{zori} it is enough to check the following identities
\begin{equation}\label{id}\sum_{j=0}^d \binom{m+n-2d-1}{m-j-i} (-1)^j
k_j(m,n,d) =0 \quad \mbox{ for} \ 1\le i\le d.\end{equation}

We actually prove that a more general identity holds for any
$i\in\mathbb{N}$:
\begin{equation}\label{eq:binomial-PfSa}
\sum_{j=0}^d \binom{m+n-2d-1}{m-j-i} (-1)^j \dfrac{\binom{d}{j}\binom{n-d+j-1}{j}}{\binom{m-1}{j}} =
\dfrac{\binom{i-1}{d}\binom{m+n-d-1}{m-i}}{\binom{m-1}{d}},
\end{equation}
The expressions in \eqref{id} are then recovered by specializing $i$ to $1,\dots,d$.

The equalities in~\eqref{eq:binomial-PfSa} follow from the Pfaff-Saalsch\"utz
identity described in Lemma~\ref{PS}. Since both sides of~\eqref{eq:binomial-PfSa}  are polynomials in~$n$
(of degree at most $m-i$) it is enough to verify them for an infinite number of
values~$n$. We will show that they hold for $n\geq 2d$.

By observing that $(a+j-1)!=(a-1)!(a)_j$, $\binom{a+j-1}{j}=
\frac{(a)_j}{j!},\, (a-j)!=(-1)^j\frac{a!}{(-a)_j}$ and $\binom{a}{j}=(-1)^j
\frac{(-a)_j}{j!}$, we deduce that the left-hand side of
\eqref{eq:binomial-PfSa} is equal, for $n\geq 2d$, to
\[ \frac{(m+n-2d-1)!}{(m-i)!(n-2d+i-1)!}\\
  \cdot \sum_{j=0}^d
 \dfrac{(n-d)_j(-(m-i))_j(-d)_j}{(n-2d+i)_j(-(m-1))_jj!}.\]

To simplify the latter sum, we now apply Lemma~\ref{PS} for $k=d$,
and for $x, y, z$ specialized respectively to $n-d,-(m-i), n-2d+i,$ and get
\begin{align*}\sum_{j=0}^d\frac{(n-d)_j (-(m-i))_j
(-d)_j}{(n-2d+i)_j(-(m-1))_j \, j!}&=
\frac{(i-d)_d(m+n-2d)_d}{(n-2d+i)_d(m-d)_d}\\ & =
\frac{\binom{i-1}{d}(m+n-2d)\cdots (m+n-d-1)}{\binom{m-1}{d}(n-2d+i)\cdots
(n-d+i-1)},\end{align*}
from which~\eqref{eq:binomial-PfSa} follows immediately.
\end{proof}

\section{Proof of Theorem~\ref{main}} \label{3} To prove Theorem~\ref{main},
we make use of the following  well-known result,
which follows for instance  from Lemmas 7.7.4 and 7.7.6 in~\cite{Mishra}.

\begin{lemma}\label{lutil} Let $ m,n,d\in \N$ with $0\le d < \min\{m,n\},$ and
$f,\,g\in\K[x]$ have degrees $m$ and $n$ respectively. Assume
$\PSres_d(f,g)\neq0$. If $\cF,\cG\in\K[x]$ are such that $\deg(\cF)<n-d,\,\deg(\cG)<m-d$ and
$h=\cF\,f+\cG\,g$ is a non-zero polynomial in $\K[x]$ of degree at most $d$,
then there exists $\lambda\in\K\setminus \{0 \}$ satisfying
$$h=\lambda\cdot\Sres_d(f,g).$$
\end{lemma}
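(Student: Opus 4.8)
The plan is to recognize that the space of linear combinations $\cF f + \cG g$ with $\deg(\cF) < n-d$ and $\deg(\cG) < m-d$ is exactly the column span of the truncated Sylvester-type matrix appearing in~\eqref{srs}, and that within this space the subresultant $\Sres_d(f,g)$ is the \emph{unique up to scalar} element of degree at most $d$ precisely because $\PSres_d(f,g)\neq 0$ guarantees it has degree exactly $d$. Concretely, I would argue as follows. First, consider the $\K$-linear map
$$
\Phi\colon \K[x]_{<n-d}\times\K[x]_{<m-d}\longrightarrow \K[x]_{<m+n-d},\qquad (\cF,\cG)\mapsto \cF f + \cG g,
$$
whose source has dimension $(n-d)+(m-d) = m+n-2d$. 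Since $f$ and $g$ have degrees exactly $m$ and $n$ (as $f_m,g_n\neq 0$), a standard degree/leading-coefficient argument shows $\Phi$ is injective: if $\cF f + \cG g = 0$ with $(\cF,\cG)\neq(0,0)$, then $\cF,\cG$ are both nonzero, $f \mid \cG g$, and since $\deg(\cG)<m$ this would force $f \mid g$ up to considering degrees — more directly, $\deg(\cF f)=\deg(\cG g)$ forces $\deg\cF - \deg\cG = n-m$, but combined with $\deg\cF<n-d$, $\deg\cG<m-d$ one gets a contradiction with $d\ge 0$ only in the cancellation, so the cleanest route is to invoke that $\gcd$ considerations give $\cF = g/\!\gcd(f,g)\cdot(\ldots)$, whose degree is too large. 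I will instead simply quote that $\Phi$ is injective as part of the cited Lemmas~7.7.4 and~7.7.6 of~\cite{Mishra}.

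Next, I would identify the image $W:=\operatorname{im}\Phi$, an $(m+n-2d)$-dimensional subspace of $\K[x]_{<m+n-d}$, and intersect it with the subspace $\K[x]_{\le d}$ of dimension $d+1$. A dimension count in $\K[x]_{<m+n-d}$ (dimension $m+n-d$) gives
$$
\dim\bigl(W\cap \K[x]_{\le d}\bigr)\;\ge\;(m+n-2d)+(d+1)-(m+n-d)\;=\;1.
$$
On the other hand, by the very definition~\eqref{srs}, $\Sres_d(f,g)\in W\cap\K[x]_{\le d}$, and the hypothesis $\PSres_d(f,g)\neq 0$ says its degree is exactly $d$, so it is nonzero. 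The key remaining point is that this intersection is \emph{exactly} one-dimensional, i.e.\ the inequality above is an equality. For this I would argue: if $W\cap\K[x]_{\le d}$ had dimension $\ge 2$, then it would contain a nonzero element of degree $<d$ (take a suitable linear combination killing the $x^d$-coefficient), say $\tilde h = \tilde\cF f + \tilde\cG g$ with $\deg\tilde h < d$; expressing this in the Sylvester matrix and comparing with the determinantal formula~\eqref{srs} would show that all the order-$d$ subresultant coefficients, in particular $\PSres_d(f,g)$, vanish — contradiction. (Equivalently, one uses that the matrix in~\eqref{srs} has full row rank $m+n-2d$ precisely when $\PSres_d\neq 0$, so the only relations among columns come from the last column, pinning down $\Sres_d$ up to scalar.)

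Having established $\dim\bigl(W\cap\K[x]_{\le d}\bigr)=1$ with $\Sres_d(f,g)$ a nonzero generator, the conclusion is immediate: the given $h = \cF f + \cG g$ lies in $W$ by construction and in $\K[x]_{\le d}$ by hypothesis, hence $h\in W\cap\K[x]_{\le d} = \K\cdot\Sres_d(f,g)$, so $h = \lambda\,\Sres_d(f,g)$ for some $\lambda\in\K$; and $\lambda\neq 0$ since $h\neq 0$. The main obstacle is the middle step — proving the intersection is \emph{exactly} one-dimensional rather than merely at least one-dimensional — which is where the nonvanishing of $\PSres_d(f,g)$ is genuinely used, and this is precisely the content packaged in the cited lemmas of~\cite{Mishra}, so in the write-up I would either cite them directly or reproduce the short rank argument on the Sylvester matrix.
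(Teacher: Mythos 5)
Your proposal is essentially correct, but note first that the paper gives no proof of this lemma at all: it simply invokes Lemmas 7.7.4 and 7.7.6 of \cite{Mishra}, so you are reconstructing an argument where the authors only cite. Your skeleton is the standard one and it does work: $\Sres_d(f,g)$ lies in $W\cap\K[x]_{\le d}$ (by Laplace expansion of the determinant \eqref{srs} along its last column, which exhibits $\Sres_d(f,g)=\cF_0 f+\cG_0 g$ with $\deg\cF_0<n-d$, $\deg\cG_0<m-d$), it is nonzero since $\PSres_d(f,g)\neq0$, and $W\cap\K[x]_{\le d}$ is at most one-dimensional, whence $h=\lambda\,\Sres_d(f,g)$ with $\lambda\neq0$. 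Two remarks. First, your injectivity discussion of $\Phi$ is both muddled (a pure leading-term comparison cannot work, since cancellation is exactly what must be excluded; injectivity amounts to $\deg\gcd(f,g)\le d$) and unnecessary: the lower bound $\dim(W\cap\K[x]_{\le d})\ge 1$ is never needed, because $\Sres_d(f,g)$ itself is already a nonzero element of that intersection. Second, the step you flag as the main obstacle can be closed in one line instead of being deferred to the citation: the map sending $(\cF,\cG)$ (with the stated degree bounds) to the coefficients of $x^{m+n-d-1},\dots,x^{d}$ of $\cF f+\cG g$ is given by a square matrix of size $m+n-2d$ whose determinant is $\pm\PSres_d(f,g)\neq0$; a nonzero element of $W\cap\K[x]_{\le d}$ of degree strictly less than $d$ would give a nonzero kernel vector of this matrix, a contradiction (and, incidentally, the invertibility of this same matrix yields the injectivity of $\Phi$ for free). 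With that observation made explicit, your argument is complete and is presumably the content of the cited lemmas of \cite{Mishra}.
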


Following Lemma~\ref{lutil}, we first prove that $\Sres_d((x-\alpha)^m, (x-\beta)^n)$ has indeed
degree $d$ when $\alpha\ne \beta$ and $\chara(\K)=0$ or $\chara(\K) \geq
m+n-d$, in other words, that its principal subresultant $\PSres_d((x-\alpha)^m, (x-\beta)^n)$ is
non-zero.
We start by recalling a well-known result, which is used in the proof.
\begin{lemma}[Proposition 8.6(i) in \cite{AJ}]\label{adj}
Let $f,\,g\in \K[x]$.
Then, for any $\alpha\in \K,$
$$\Sres_d(f,g)(x+\alpha)=\Sres_d(f(x+\alpha),g(x+\alpha))(x).
$$
\end{lemma}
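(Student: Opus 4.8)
The plan is to read $\Sres_d(f,g)$ as a genuine determinant of a matrix with entries in $\K[x]$ and to track how the translation automorphism of $\K[x]$ acts on it. Write $M(f,g)$ for the $(m+n-2d)\times(m+n-2d)$ matrix appearing in~\eqref{srs}: its last column is the column of polynomials $x^{n-d-1}f,\dots,f,\,x^{m-d-1}g,\dots,g$, while its remaining $m+n-2d-1$ columns are scalar, filled with the coefficients of $f$ and $g$; by definition $\Sres_d(f,g)=\det M(f,g)$, a polynomial of degree $\le d$. Let $T_\alpha\colon\K[x]\to\K[x]$, $p(x)\mapsto p(x+\alpha)$, which is a ring automorphism fixing $\K$. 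Since ring homomorphisms commute with determinants, applying $T_\alpha$ entrywise commutes with $\det$, and it affects only the last column of $M(f,g)$; writing $\tilde f:=f(x+\alpha)$ and $\tilde g:=g(x+\alpha)$, this gives
\[
\Sres_d(f,g)(x+\alpha)=\det M^{\sharp},
\]
where $M^{\sharp}$ has the same scalar columns as $M(f,g)$ and last column $(x+\alpha)^{n-d-1}\tilde f,\dots,\tilde f,\,(x+\alpha)^{m-d-1}\tilde g,\dots,\tilde g$.

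Next I would introduce an intermediate matrix $N$: it is built exactly as $M(\tilde f,\tilde g)$ is built from~\eqref{srs} for the pair $\tilde f,\tilde g$, except that its rows---and hence the entries of its last column---are the polynomials $(x+\alpha)^{i}\tilde f$ and $(x+\alpha)^{j}\tilde g$ rather than $x^{i}\tilde f$ and $x^{j}\tilde g$. Because $x^{i}=\bigl((x+\alpha)-\alpha\bigr)^{i}$ expands unitriangularly in the $(x+\alpha)^{k}$, the rows of $M(\tilde f,\tilde g)$ are obtained from those of $N$ by elementary row operations carried out separately inside the $f$-block of rows and inside the $g$-block; these operations act identically on every column, in particular on the polynomial last column, and do not change a determinant over $\K[x]$. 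Hence $\det N=\det M(\tilde f,\tilde g)=\Sres_d(\tilde f,\tilde g)$.

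It then remains to prove $\det M^{\sharp}=\det N$. The matrices $M^{\sharp}$ and $N$ have the same size and, once the $f$- and $g$-rows are listed in the natural order, the very same last column (both equal to $(x+\alpha)^{n-d-1}\tilde f,\dots,(x+\alpha)^{m-d-1}\tilde g,\dots,\tilde g$); they differ only in the scalar columns. The entry of $M^{\sharp}$ in the row attached to $x^{i}f$ and the column attached to the monomial $x^{l}$ (with $d+1\le l\le m+n-d-1$) is the coefficient of $x^{l}$ in $x^{i}f$, whereas the corresponding entry of $N$ is the coefficient of $x^{l}$ in $(x+\alpha)^{i}\tilde f=T_\alpha(x^{i}f)$. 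From $(x+\alpha)^{k}=\sum_{l}\binom{k}{l}\alpha^{k-l}x^{l}$, the coefficient of $x^{l}$ in $T_\alpha(v)$ is, for $l\ge d+1$, a $\Z[\alpha]$-combination of the coefficients of $x^{k}$, $k\ge l$, in $v$ alone---all of which already sit among the scalar columns---with the diagonal term ($k=l$) having coefficient $1$. Thus the scalar columns of $N$ are unitriangular $\K$-combinations of those of $M^{\sharp}$, so $\det M^{\sharp}=\det N$; chaining the equalities gives $\Sres_d(f,g)(x+\alpha)=\Sres_d(f(x+\alpha),g(x+\alpha))(x)$.

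The argument is essentially bookkeeping, and the only point demanding care is the observation just used: that for $l\ge d+1$ the coefficient of $x^{l}$ in $T_\alpha(v)$ involves only coefficients of $v$ of degree $\ge l$, so that the required column operations stay among the scalar columns and never disturb the polynomial column, and similarly that the row operations relating $N$ and $M(\tilde f,\tilde g)$ stay within each of the $f$- and $g$-blocks. (If one preferred to avoid matrices, there is a quick alternative in the square-free case: when $f=\prod_i(x-\alpha_i)$ and $g=\prod_j(x-\beta_j)$ have simple roots, every factor $\mathcal R(\,\cdot\,,\,\cdot\,)$ formed from subsets of the root sets in Sylvester's double sum depends only on differences of roots, hence is unchanged when all roots are shifted by $-\alpha$, while $\mathcal R(x,A')$ turns into $\mathcal R(x+\alpha,A')$; this gives the covariance of $\Syl_{p,q}$, hence of $\Sres_d$, for square-free $f,g$, and the general case follows since the coefficients of both sides of the asserted identity are polynomials in the coefficients of $f,g$ and in $\alpha$ and the square-free locus is Zariski dense.)
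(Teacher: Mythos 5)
Your proof is correct. Note that the paper itself offers no argument for this lemma: it is quoted as Proposition 8.6(i) of Ap\'ery--Jouanolou \cite{AJ}, so there is nothing internal to compare against; what you have written is a complete, self-contained verification of the cited fact. Your route is the natural one: read \eqref{srs} as a determinant over $\K[x]$, use that the translation $T_\alpha$ is a ring automorphism fixing $\K$ (so it commutes with $\det$ and only touches the polynomial last column), and then repair the mismatch between the two matrices by unimodular operations --- block-unitriangular row operations inside the $f$- and $g$-blocks coming from the expansion of $x^i$ in powers of $x+\alpha$, and unitriangular column operations among the scalar columns coming from $\coeff_{x^l}(v(x+\alpha))=\sum_{k\ge l}\binom{k}{l}\alpha^{k-l}\coeff_{x^k}(v)$. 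You correctly isolate the one point that makes this work, namely that for $l\ge d+1$ this formula involves only coefficients of degree $k\ge l$, all of which are indexed by the scalar columns $x^{d+1},\dots,x^{m+n-d-1}$, so the polynomial column is never disturbed and the transition matrices have determinant $1$. The parenthetical alternative via Sylvester double sums is weaker as stated: the double-sum identity is for monic square-free $f,g$, carries the factor $\binom{d}{p}$, and the density argument needs a word about leading coefficients (e.g.\ using $\Sres_d(\lambda f,\mu g)=\lambda^{n-d}\mu^{m-d}\Sres_d(f,g)$) and about working over $\Z$ before specializing to positive characteristic; but since it is only an aside, it does not affect the main argument, which stands as is.
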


\begin{proposition} \label{princ} Let $d,m,n\in \N$ with $0< d <
\min\{m,n\},$ and $\alpha,\beta\in \K.$ Then,
$$ \PSres_d\left( (x-\alpha)^m,(x-\beta)^n\right)=
(\alpha-\beta)^{(m-d)(n-d)}\,\prod_{i=1}^{d}\frac{(i-1)!\,(c+i)!}{(m-i)!(n-i)!}.$$
In particular, if $\alpha\ne \beta$ and  $\chara(\K)=0$ or   $\chara(\K) \geq m+n-d$, then
\[\deg \left(\Sres_d((x-\alpha)^m,(x-\beta)^n) \right)=d.\]
\end{proposition}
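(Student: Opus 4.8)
The plan is to establish the closed form for $\PSres_d$ directly from the determinantal definition~\eqref{srs}, using Lemma~\ref{adj} to normalise one of the two polynomials to the pure power $x^n$, and then to invoke Ostrowski's evaluation (Lemma~\ref{ost}); the degree statement then falls out of a divisibility argument on the resulting product of factorials. First I would reduce to $\beta=0$: since $\PSres_d(f,g)$ is the coefficient of $x^d$ in $\Sres_d(f,g)$, a polynomial of degree at most $d$, it is unchanged by the substitution $x\mapsto x+\beta$, so Lemma~\ref{adj} gives
\[
\PSres_d\bigl((x-\alpha)^m,(x-\beta)^n\bigr)=\PSres_d\bigl((x-\gamma)^m,\,x^n\bigr),\qquad \gamma:=\alpha-\beta .
\]
From here on $f=(x-\gamma)^m$, so $f_k=\binom{m}{k}(-\gamma)^{m-k}$, and $g=x^n$, so $g_n=1$ and $g_k=0$ for $k<n$.

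Next I would turn $\PSres_d(f,g)$ into a binomial determinant. Expanding~\eqref{srs} by multilinearity along its last (polynomial) column and extracting the coefficient of $x^d$ from each entry $x^{n-d-i}f$ and $x^{m-d-i}g$, one gets $\PSres_d(f,g)=\det M$, where $M$ is the $(m+n-2d)\times(m+n-2d)$ matrix whose first $n-d$ rows have $(i,j)$-entry $f_{m+i-j}$ and whose last $m-d$ rows (indexed by $i'=1,\dots,m-d$) have entry $g_{n+i'-j}$; indeed the vector of coefficients of $x^d$ simply continues the Sylvester band. Since $g=x^n$, the last $m-d$ rows of $M$ form the block $[\,I_{m-d}\mid 0\,]$, so a Laplace expansion along them leaves, up to the sign $(-1)^{(m-d)(n-d)}$, the minor $\det\bigl(f_{d+i-j}\bigr)_{1\le i,j\le n-d}$ formed by the first $n-d$ rows and the last $n-d$ columns. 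Pulling $(-\gamma)^{m-d-i}$ out of row $i$ and $(-\gamma)^{j}$ out of column $j$ extracts the scalar $(-\gamma)^{(m-d)(n-d)}$, which together with the Laplace sign collapses to $\gamma^{(m-d)(n-d)}$, and the problem reduces to evaluating
\[
D:=\det\left(\binom{m}{\,d+i-j\,}\right)_{1\le i,j\le n-d}.
\]

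To evaluate $D$ I would reverse the order of the rows: this changes $D$ by $(-1)^{\binom{n-d}{2}}$ and produces the Hankel matrix $\bigl(\binom{m}{a_i-j}\bigr)_{0\le i,j\le n-d-1}$ with $a_i=n-1-i$, to which Lemma~\ref{ost} applies with $\ell=m$ and $k=n-d-1$. In that formula the Vandermonde factor $\prod_{0\le i<i'\le k}(a_{i'}-a_i)$ equals $(-1)^{\binom{n-d}{2}}\prod_{j=0}^{k}j!$, whose sign cancels the one from the row reversal, and the remaining quotient of factorials simplifies --- using $n-1-k=d$, $m-d+k=c$ and $c+d=m+n-d-1$ --- to $\prod_{i=1}^{d}\frac{(i-1)!\,(c+i)!}{(m-i)!(n-i)!}$, yielding the asserted formula. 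For the ``in particular'' part, specialising $\gamma=1$ shows this product is the principal subresultant coefficient of the integer polynomials $(x-1)^m$ and $x^n$, hence a nonzero integer; being a quotient of factorials of integers at most $c+d=m+n-d-1$, every prime dividing it is $<m+n-d$, so it is invertible in $\K$ whenever $\chara(\K)=0$ or $\chara(\K)\ge m+n-d$. Together with $(\alpha-\beta)^{(m-d)(n-d)}\ne0$ for $\alpha\ne\beta$, this gives $\PSres_d\bigl((x-\alpha)^m,(x-\beta)^n\bigr)\ne0$, and since $\Sres_d$ always has degree at most $d$ its degree is exactly $d$.

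I expect the real work to be bookkeeping rather than ideas: correctly tracking the three signs that occur --- the Laplace sign $(-1)^{(m-d)(n-d)}$, the row-reversal sign $(-1)^{\binom{n-d}{2}}$, and the sign hidden in the Vandermonde factor of Ostrowski's formula --- and then massaging the product of factorials returned by Lemma~\ref{ost} into exactly the shape $\prod_{i=1}^{d}\frac{(i-1)!(c+i)!}{(m-i)!(n-i)!}$. One small point also deserves care: the identity should be proved over $\Q(\gamma)$ (or $\Z[\gamma]$) and then transported to $\K$ through the ring homomorphism sending $\gamma\mapsto\alpha-\beta$, which is legitimate because the entries of the Sylvester matrix are universal polynomials in the coefficients of $f$ and $g$.
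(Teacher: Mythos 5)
Your proposal is correct and follows essentially the same route as the paper's proof: translate via Lemma~\ref{adj} so that one polynomial becomes a pure power, reduce the principal-subresultant determinant through the resulting identity block, pull out the powers of $\alpha-\beta$, and evaluate the remaining binomial determinant with Ostrowski's Lemma~\ref{ost}, finishing the degree claim by the prime-factor bound $m+n-d$. The only difference is cosmetic: you normalise $g$ to $x^n$ (getting an $(n-d)\times(n-d)$ determinant in $\binom{m}{\cdot}$ and some extra sign bookkeeping), whereas the paper normalises $f$ to $x^m$ and lands directly on $\det\bigl(\binom{n}{d-i+j}\bigr)_{1\le i,j\le m-d}$; your factorial simplification does indeed reduce to the stated product.
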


\begin{proof}
\begin{align*}\PSres_d((x-\alpha)^m,(x-\beta)^n)&= \PSres_d(x^m,(x+\alpha-\beta)^n)\\ &=
\PSres_d(x^m,\sum_{j=0}^n{n\choose j}(\alpha-\beta)^{n-j}x^j).\end{align*}
Therefore, by the definition of the principal subresultant,
{\small{\begin{align*}&\PSres_d((x-\alpha)^m,(x-\beta)^n)=\\
&=\det \begin{array}{|ccccccc|c}
\multicolumn{7}{c}{\scriptstyle{m+n-2d}}&\\
\cline{1-7}
1& &  & \cdots &0 &\cdots &0& \\
&  \ddots & &  &\vdots & & \vdots& \scriptstyle{n-d}\\
&   &\ddots & & \vdots & &  \vdots&\\
& &  &1&0& \cdots &0& \\
\cline{1-7}
1 & &  &\cdots &\binom{n}{d}(\alpha-\beta)^{n-d}  &\cdots &\binom{n}{m-1}(\alpha-\beta)^{n-(m-1)}\\
&\ddots &&& \vdots&  &\vdots  &\scriptstyle{m-d}\\
&&1 &\dots&  \binom{n}{2d-m+1}(\alpha-\beta)^{n-(2d-m+1)}& \dots &\binom{n}{d}(\alpha-\beta)^{n-d}&\\
\cline{1-7} \multicolumn{2}{c}{}
\end{array}
\\
&= \det \begin{array}{|ccc|c}
\multicolumn{3}{c}{\scriptstyle{m-d}}\\
\cline{1-3}
\binom{n}{d}(\alpha-\beta)^{n-d}&\dots &\binom{n}{m-1}(\alpha-\beta)^{n-(m-1)} &\\
\binom{n}{d-1}(\alpha-\beta)^{n-(d-1)} & \ldots  &  \binom{n}{m-2}(\alpha-\beta)^{n-(m-2)} &{\scriptstyle{m-d}}\\
\vdots& &\vdots &\\
\binom{n}{2d-m+1}(\alpha-\beta)^{n-(2d-m+1)}&\dots &\binom{n}{d}(\alpha-\beta)^{n-d}&\\
 \cline{1-3}
 \multicolumn{2}{c}{}
\end{array}
\\ &= (\alpha-\beta)^{(m-d)(n-d)}  \det\left({n\choose d-i+j}
 \right)_{1\leq i,j\leq m-d} \\
&=(\alpha-\beta)^{(m-d)(n-d)} \prod_{i=1}^{ d}\frac{(i-1)!\,(c+i)!}{(m-i)!(n-i)!}.
\end{align*}}}
The third equality above follows from the ``weighted'' homogeneities of the
determinant. Indeed, by multiplying the $i$-th row in the second matrix above
by $(\alpha-\beta)^{i-1},\,1\leq i\leq m-d,$ the whole determinant gets
multiplied by $(\alpha-\beta)^{1+\cdots +(m-d-1)}=
(\alpha-\beta)^{\binom{m-d}{2}},$ but now for each $j=1,\ldots, m-d,$
column~$j$ has the same term $(\alpha-\beta)^{n-d+j-1}$ that can be factored
out, obtaining $(\alpha-\beta)^{(n-d+m-d-1)+\cdots+ (n-d+0)}=
(\alpha-\beta)^{(m-d)(n-d)+\binom{m-d}{2}}$ and one can then clear out the
spurious $(\alpha-\beta)^{\binom{m-d}{2}}),$ and the equality can be derived
from~Lemma~\ref{ost} with $\ell=n$, $k=m-d-1$ and $a_j= d+j+1$ for $0\le i\le
m-d-1.$
%
\end{proof}

We now show how to express a scalar multiple of the polynomial expression
$\sum_{j=0}^d q_j(m,n,d)(x-\alpha)^j(x-\beta)^{d-j}$ as a polynomial
combination $\cF\cdot (x-\alpha)^m+\cG\cdot (x-\beta)^n,$ with $\cF$ and $\cG$
satisfying the hypothesis of Lemma~\ref{lutil}. For this, we define for $0\le
d<\min\{m,n\},$
\begin{equation}\label{hdd}
h_d(\alpha,\beta,m,n):= (\alpha-\beta)^{c}\Big( \sum_{j=0}^d
q_j(m,n,d)(x-\alpha)^j(x-\beta)^{d-j}\Big).
\end{equation}
Note that $h_d(\alpha,\beta,m,n)\in\K[x]$ has degree bounded by $d.$
\begin{proposition}\label{hd}
Let $d, m,n\in \N$ with $0\le d < \min\{m,n\},$ and  $\alpha,\beta\in \K$.
There exist  $\cF,\,\cG\in \K[x]$ with $\deg(\cF)<n-d,\,\deg(\cG)<m-d$ such
that
$$h_d(\alpha,\beta,m,n) =\cF\cdot (x-\alpha)^m+ \cG\cdot(x-\beta)^n.$$
\end{proposition}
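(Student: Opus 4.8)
The plan is to use the change of variables that turns the Bernstein basis $\{(x-\alpha)^j(x-\beta)^{d-j}\}_{0\le j\le d}$ into a monomial basis. Write $t:=x-\alpha$, $s:=x-\beta$ and $\delta:=\alpha-\beta$, so that $s=t+\delta$ and
\[
  h_d(\alpha,\beta,m,n)=\delta^{c}\,s^{d}\,P(t/s),\qquad P(y):=\sum_{j=0}^{d} q_j(m,n,d)\,y^{j}.
\]
If $\alpha=\beta$ the statement is trivial, since $c\ge 1$ forces $h_d=0$; so I would assume $\delta\neq 0$, which is exactly what makes the substitution $y\mapsto t/s$ reversible ($1-t/s=\delta/s$ is then a unit). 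The whole argument will be just a careful passage between $(1-y)^{c}P(y)$ as a polynomial in $y$ and $h_d$ as a polynomial in $x$.

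The only input about the integers $q_j(m,n,d)$ that I would use is that the signed vector $\mathbf q(m,n,d)=(q_0,-q_1,\dots,(-1)^{d}q_d)$ lies in $\ker H(m,n,d)$ — the standard Cramer relation for the maximal minors of a $d\times(d+1)$ matrix (adjoining any row of $H(m,n,d)$ to itself produces a square matrix with a repeated row, hence a zero determinant), which needs nothing from Section~\ref{2}. Unwinding the matrix--vector product, the $i$-th coordinate of $H(m,n,d)\,\mathbf q(m,n,d)$ equals $\sum_{j}(-1)^{j}q_j\binom{c}{m-i-j}$, and this is, up to the factor $(-1)^{m-i}$, the coefficient of $y^{m-i}$ in $(1-y)^{c}P(y)$. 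Hence the kernel relation says precisely that the polynomial $(1-y)^{c}P(y)$, of degree at most $c+d=m+n-d-1$, has vanishing coefficients in every degree $m-d,m-d+1,\dots,m-1$. Consequently I may split it as $(1-y)^{c}P(y)=\widehat{A}(y)+y^{m}\widehat{B}(y)$ with $\deg\widehat{A}\le m-d-1$ and $\deg\widehat{B}\le n-d-1$.

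Now I would substitute $y=t/s$. Using $1-t/s=\delta/s$ one gets $(\delta/s)^{c}P(t/s)=\widehat{A}(t/s)+(t/s)^{m}\widehat{B}(t/s)$, hence
\[
  h_d=\delta^{c}s^{d}P(t/s)=s^{\,c+d}\,\widehat{A}(t/s)+s^{\,c+d-m}\,t^{m}\,\widehat{B}(t/s).
\]
The bookkeeping is that $\widetilde{A}(x):=s^{\,m-d-1}\widehat{A}(t/s)$ and $\widetilde{B}(x):=s^{\,n-d-1}\widehat{B}(t/s)$ are honest polynomials in $x$ (each monomial $y^{k}$ occurring in $\widehat A$, resp. $\widehat B$, has $k\le m-d-1$, resp. $k\le n-d-1$, so its denominator is cleared), of degrees at most $m-d-1<m-d$ and $n-d-1<n-d$ respectively; and since $c+d-(m-d-1)=n$ and $c+d-m=n-d-1$, the displayed identity becomes
\[
  h_d(\alpha,\beta,m,n)=(x-\alpha)^{m}\,\widetilde{B}(x)+(x-\beta)^{n}\,\widetilde{A}(x),
\]
so $\cF:=\widetilde{B}$ and $\cG:=\widetilde{A}$ satisfy the required degree bounds.

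I expect the only real obstacle to be spotting this reformulation: recognizing that the Bernstein-to-monomial substitution $y=(x-\alpha)/(x-\beta)$ sends $h_d$, after clearing denominators by a suitable power of $(x-\beta)$, to $(1-y)^{c}P(y)$, and that the ``degree gap'' required of $(1-y)^{c}P(y)$ is nothing but the Cramer kernel relation for the Hankel matrix $H(m,n,d)$. A more pedestrian route — solving the congruence $\cG\,(x-\beta)^{n}\equiv h_d \pmod{(x-\alpha)^{m}}$ by expanding $h_d/(x-\beta)^{n}$ as a power series at $x=\alpha$ and checking that the truncation has degree $<m-d$ — also works, but it reduces to a terminating ${}_2F_1$ (Chu--Vandermonde) summation and is considerably less transparent.
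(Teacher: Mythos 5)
Your argument is correct, and at its core it is the paper's own decomposition in a dehomogenized disguise. The paper multiplies each Bernstein monomial $(x-\alpha)^j(x-\beta)^{d-j}$ by $(\alpha-\beta)^c=\big((\alpha-x)+(x-\beta)\big)^c$ expanded binomially, and sorts the resulting terms $(x-\alpha)^{k+j}(x-\beta)^{c+d-k-j}$ into multiples of $(x-\alpha)^m$ (yielding $\cF$), multiples of $(x-\beta)^n$ (yielding $\cG$), and the middle terms $(x-\alpha)^{m-i}(x-\beta)^{n-d+i-1}$, $1\le i\le d$, which cancel after summing against the $q_j$ precisely because $(q_0,-q_1,\dots,(-1)^dq_d)\in\ker H$. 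Under your substitution $y=(x-\alpha)/(x-\beta)$ these three groups are exactly $y^m\widehat B(y)$, $\widehat A(y)$, and the vanishing coefficients of $y^{m-d},\dots,y^{m-1}$ in $(1-y)^cP(y)$, and your rehomogenization by powers of $s=x-\beta$ reproduces the paper's degree bookkeeping, so the two proofs are term-by-term equivalent. Two small but genuine divergences are worth noting: (i) you justify the kernel membership directly by the Laplace-expansion (repeated-row) relation among the maximal minors of the $d\times(d+1)$ matrix, which holds over any commutative ring and makes Proposition~\ref{hd} logically independent of Section~\ref{2}, whereas the paper invokes the proof of Lemma~\ref{kjmnd}, which uses $q_0\neq 0$ from Lemma~\ref{lemmq0} to say that this vector \emph{generates} the kernel --- more than is needed here; (ii) your case split at $\alpha=\beta$ is unnecessary, since substituting $y=t/s$ into the polynomial identity $(1-y)^cP(y)=\widehat A(y)+y^m\widehat B(y)$ and clearing denominators by powers of $s$ is legitimate in $\K(x)$ whether or not $\alpha=\beta$ (no inverse of $\alpha-\beta$ is ever used), but it is harmless.
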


\begin{proof}
Set $f:=(x-\alpha)^m$ and $g:=(x-\beta)^n$, and write \begin{align*}(\alpha-\beta)^{c}&= (\alpha-x+x-\beta)^{c} =\sum_{k=0}^{c} (-1)^k\binom{c}{k}(x-\alpha)^k(x-\beta)^{c-k}.\end{align*}
Fix $0\le j\le d$. Then,
$$(\alpha-\beta)^{c}(x-\alpha)^j(x-\beta)^{d-j}= \sum_{k=0}^{c}
(-1)^k\binom{c}{k}(x-\alpha)^{k+j}(x-\beta)^{c-k+d-j}.$$
For $k+j\ge m$ the corresponding terms in the right-hand side are polynomial
multiples of $f$, with coefficient $\cF_j$ of degree bounded by
$(k+j)+(c-k+d-j)-m= n-d-1$. Similarly, for $c-k+d-j\ge n$, the corresponding
terms are multiples of $g$, with coefficient $\cG_j$ of degree bounded by
$(k+j)+ (c-k+d-j) -n= m-d-1$. The remaining terms satisfy $k+j<m$, i.e. $k<m-j$ and
$c-k+d-j< n$, i.e. $k>m-j-d-1$.

Therefore
\begin{align*}(\alpha-\beta)^{c}&(x-\alpha)^j(x-\beta)^{d-j}\\ &= \cF_j\,f+\cG_j\,g + \sum_{k=m-j-d}^{m-j-1} (-1)^k\binom{c}{k}(x-\alpha)^{k+j}(x-\beta)^{c-k+d-j}\\
&= \cF_j\,f+\cG_j\,g +\sum_{i=1}^{d} (-1)^{m-i-j}\binom{c}{m-i-j}(x-\alpha)^{m-i}(x-\beta)^{n-d+i-1}.\end{align*}
Multiplying each of these equations  by $q_j(m,n,d)$ for $0\le j\le  d$ and adding them up, we get
 { \begin{align*}h_d&(\alpha,\beta,m,n)= (\alpha-\beta)^{c}\Big( \sum_{j=0}^d q_j(m,n,d)(x-\alpha)^j(x-\beta)^{d-j}\Big)\\
 & = \cF\,f+\cG\,g \ +  \\ & \qquad  \sum_{j=0}^d\Big( \sum_{i=1}^{d} (-1)^{m-i-j}\binom{c}{m-i-j}q_j(m,n,d)(x-\alpha)^{m-i}(x-\beta)^{n-d+i-1}\Big),\end{align*}}

\noindent with $\cF:=\sum_{j=0}^dq_j(m,n,d)\cF_j $ and $\cG:=\sum_{j=0}^dq_j(m,n,d)\cG_j $. It turns out that
 \begin{align*}&\sum_{j=0}^d\Big(  \sum_{i=1}^{d} (-1)^{m-i-j}\binom{c}{m-i-j}q_j(m,n,d) \,(x-\alpha)^{m-i}(x-\beta)^{n-d+i-1}\Big) \\
\quad &=\sum_{i=1}^{d} (-1)^{m-i}(x-\alpha)^{m-i}(x-\beta)^{n-d+i-1}\left( \sum_{j=0}^d (-1)^{j}\binom{c}{m-i-j}q_j(m,n,d)\right)\\
\quad &= 0,
\end{align*}
 since, as observed in the proof of Lemma~\ref{kjmnd},
 $(q_0(m,n,d),-q_1(m,n,d),\dots,$ $ (-1)^dq_d(m,n,d))$ generates $\ker H$.\\
 Therefore
$h_d(\alpha,\beta,m,n)=\cF \cdot (x-\alpha)^m+\cG \cdot (x-\beta)^n$ with $\deg(\cF)<n-d$ and $\deg(\cG)<m-d$.
\end{proof}

We  now compute explicitly the $d$-th coefficient  of  $h_d(\alpha,\beta,m,n)$, which also   implies in particular that it has degree exactly
$d$ when  $\alpha\ne \beta$ and $\chara(\K)=0$ or $\chara(\K)\ge m+n-d$.

\begin{proposition}\label{mainHd} Let $d,m,n\in \N$ with $0< d < \min\{m,n\},$ and $\alpha,\beta\in \K.$ Then,
$$\coeff_{x^d}\big(h_d(\alpha,\beta,m,n)\big)=
 (-1)^{\binom{d}{2}}(\alpha-\beta)^c\prod_{i=1}^{ d}\frac{(i-1)!\,(c+i)!}{(m-i)!(n-i)!}.$$ For $d=0$ we have $h_0(\alpha,\beta,m,n)=(\alpha-\beta)^c$.
\end{proposition}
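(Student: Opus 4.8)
The plan is to extract the coefficient of $x^d$ from the Bernstein representation of $h_d$ and then reduce everything to one hypergeometric summation. Since every Bernstein polynomial $(x-\alpha)^j(x-\beta)^{d-j}$ is monic of degree $d$, its coefficient of $x^d$ equals $1$, so directly from~\eqref{hdd} one gets
\[
\coeff_{x^d}\big(h_d(\alpha,\beta,m,n)\big)=(\alpha-\beta)^{c}\sum_{j=0}^d q_j(m,n,d).
\]
For $d=0$ the sum is $q_0(m,n,0)=1$, which gives $h_0(\alpha,\beta,m,n)=(\alpha-\beta)^{c}$, so from now on I take $d>0$. It then suffices to establish the identity $\sum_{j=0}^d q_j(m,n,d)=(-1)^{\binom d2}\prod_{i=1}^d \frac{(i-1)!\,(c+i)!}{(m-i)!(n-i)!}$ over~$\mathbb Q$: its left-hand side is a sum of integers, hence the right-hand side is the same integer, and the stated formula over an arbitrary field $\K$ follows by applying the ring homomorphism $\Z\to\K$ and multiplying by $(\alpha-\beta)^{c}$.

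To evaluate $\sum_{j=0}^d q_j(m,n,d)$ over $\mathbb Q$ I would use Theorem~\ref{m2}, which is already available from Section~\ref{2}: it gives $q_j(m,n,d)=k_j(m,n,d)\,q_0(m,n,d)$ with $k_j(m,n,d)=\binom dj\binom{n-d+j-1}{j}\big/\binom{m-1}{j}$, so $\sum_{j=0}^d q_j(m,n,d)=q_0(m,n,d)\sum_{j=0}^d k_j(m,n,d)$. Rewriting the three binomials through Pochhammer symbols exactly as in the proof of Lemma~\ref{kjmnd} (this is legitimate because $d<m$, so the denominators $(1-m)_j$ do not vanish) turns $k_j(m,n,d)$ into $\dfrac{(-d)_j\,(n-d)_j}{(1-m)_j\,j!}$, and therefore $\sum_{j=0}^d k_j(m,n,d)$ becomes the terminating Gauss series $\sum_{j=0}^d\dfrac{(-d)_j\,(n-d)_j}{(1-m)_j\,j!}$, a ${}_2F_1$ evaluated at~$1$.

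This last sum is evaluated by the Chu--Vandermonde summation formula --- which is the limiting case $y\to\infty$ of the Pfaff--Saalsch\"utz identity in Lemma~\ref{PS}, and which I would either derive that way or simply quote --- yielding $\sum_{j=0}^d k_j(m,n,d)=\dfrac{\big(1-m-(n-d)\big)_d}{(1-m)_d}$. Since $c=m+n-2d-1$ we have $1-m-(n-d)=-(c+d)$, hence $\big(1-m-(n-d)\big)_d=(-1)^d\prod_{i=1}^d(c+i)$ and $(1-m)_d=(-1)^d\prod_{i=1}^d(m-i)$; thus $\sum_{j=0}^d k_j(m,n,d)=\prod_{i=1}^d\frac{c+i}{m-i}$. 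Multiplying this by the explicit value of $q_0(m,n,d)$ from Theorem~\ref{m2} (equivalently Lemma~\ref{lemmq0}) and using $(c+i-1)!\,(c+i)=(c+i)!$ and $(m-i-1)!\,(m-i)=(m-i)!$ factor by factor gives exactly $(-1)^{\binom d2}\prod_{i=1}^d\frac{(i-1)!\,(c+i)!}{(m-i)!(n-i)!}$, completing the proof.

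I do not expect a genuine obstacle here: once the sum $\sum_{j=0}^d k_j(m,n,d)$ is recognized as a Chu--Vandermonde evaluation, the remainder is routine bookkeeping with Pochhammer symbols. The only step that needs care is tracking the sign $(-1)^{\binom d2}$, which enters solely through $q_0(m,n,d)$ and must survive intact because the correcting factor $\prod_{i=1}^d (c+i)/(m-i)$ is sign-free. If one prefers to stay within the paper's toolkit rather than invoke Chu--Vandermonde, the identity $\sum_{j=0}^d k_j(m,n,d)=\prod_{i=1}^d (c+i)/(m-i)$ can instead be verified by the device used in Lemma~\ref{kjmnd}: both sides are polynomials in $n$ of degree at most $d$, so it suffices to check it for $n\ge 2d$, where the left-hand side is again a terminating ${}_2F_1$ at $1$ amenable to Lemma~\ref{PS}.
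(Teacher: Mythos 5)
Your proof is correct, but it takes a genuinely different route from the paper's. Both arguments start the same way, reducing the claim to the evaluation of $\sum_{j=0}^d q_j(m,n,d)$. The paper then stays inside its determinant machinery: it writes $\sum_{j}q_j(m,n,d)$ as a single $(d+1)\times(d+1)$ determinant obtained by bordering $H(m,n,d)$ with the row $(1,-1,\dots,(-1)^d)$, performs column operations using Pascal's rule $\binom{c}{k-1}+\binom{c}{k}=\binom{c+1}{k}$ to identify it with $q_0(m+1,n,d)$, and concludes by Lemma~\ref{lemmq0} (Ostrowski) applied with $m$ replaced by $m+1$; notably it never needs the ratios $q_j/q_0$ of Lemma~\ref{kjmnd}. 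You instead invoke Theorem~\ref{m2} to write $q_j=k_j\,q_0$ with $k_j=\frac{(-d)_j(n-d)_j}{(1-m)_j\,j!}$ and sum the resulting terminating ${}_2F_1$ at $1$ by Chu--Vandermonde, getting $\sum_j k_j=\prod_{i=1}^d\frac{c+i}{m-i}$, which combined with $q_0$ gives the stated product; there is no circularity, since Theorem~\ref{m2} is proved independently of Section~\ref{3}, and your Pochhammer manipulations and the telescoping $(c+i-1)!\,(c+i)=(c+i)!$, $(m-i-1)!\,(m-i)=(m-i)!$ are all in order. What each approach buys: the paper's is more economical in prerequisites (only Lemma~\ref{lemmq0}, no hypergeometric input beyond what Theorem~\ref{m2} already required elsewhere) and yields the structurally pleasant identity $\sum_j q_j(m,n,d)=q_0(m+1,n,d)$; yours is a natural consequence of the explicit coefficient formulas and makes the value of the leading coefficient transparent once Theorem~\ref{m2} is in hand, at the cost of importing one more classical summation. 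One small caveat: your closing alternative --- checking the identity for $n\ge 2d$ and appealing to Lemma~\ref{PS} --- does not quite work as stated, because Lemma~\ref{PS} is a ${}_3F_2$ identity and collapsing it to your ${}_2F_1$ forces a relation among the parameters ($z=1+x-k$) that is incompatible with $z=1-m$ in general; you genuinely need Chu--Vandermonde (or the $y\to\infty$ degeneration of Lemma~\ref{PS}, which is legitimate since both sides are rational in $y$), so it is best to quote it outright as you do in your main argument.
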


\begin{proof} It is clear that $\coeff_{x^d}(h_d(\alpha,\beta,m,n))= (\alpha-\beta)^{c} \sum_{j=0}^d q_j(m,n,d)$. The $d=0$ case follows from our convention that $q_0(m,n,0)=1$.

\smallskip
\noindent
We now show that $\sum_{j=1}^d q_j(m,n,d)=q_0(m+1,n,d)$, which proves the
statement by Lemma \ref{lemmq0}.

\noindent
Observe that
 $$\sum_{j=0}^d q_j(m,n,d)=\det \begin{array}{|ccc|c}
\multicolumn{3}{c}{\scriptstyle{d+1}}\\
\cline{1-3}
1 & \dots & (-1)^d&\\
\binom{c}{m-1}&  \dots & \binom{c}{m-d-1}&{\scriptstyle{d+1}}\\
\vdots & &\vdots&\\
\binom{c}{m-d}&\dots &  \binom{c}{m-2d}&\\
 \cline{1-3}
  \multicolumn{2}{c}{}
\end{array}.$$
For $0\le j\le d$ let ${\bf C}(j)$ denote the $(j+1)$-th column of the matrix above. We perform the following operations: ${\bf C}(j)+{\bf C}(j-1)\to {\bf C}(j)$ for $j=d,\dots,0$. By using the identity $\binom{c}{k-1}+\binom{c}{k}=\binom{c+1}{k},$ we get
$$\begin{array}{l}\det \begin{array}{|ccc|c}
\multicolumn{3}{c}{\scriptstyle{d+1}}\\
\cline{1-3}
1 & \dots & (-1)^d&\\
\binom{c}{m-1}&  \dots & \binom{c}{m-d-1}&{\scriptstyle{d+1}}\\
\vdots & &\vdots&\\
\binom{c}{m-d}&\dots &  \binom{c}{m-2d}&\\
 \cline{1-3}
  \multicolumn{2}{c}{}
\end{array} =\det\begin{array}{|c|ccc|c}
\multicolumn{1}{c}{\scriptstyle{1}}&\multicolumn{3}{c}{\scriptstyle{d}}\\
\cline{1-4}
1& 0 &\dots &0&{\scriptstyle{1}}\\
 \cline{1-4}
\binom{c}{m-1}&\binom{c+1}{m-1} & \dots & \binom{c+1}{m-d}&\\
\vdots&\vdots & &\vdots&{\scriptstyle{d}}\\
\binom{c}{m-d-1}&\binom{c+1}{m-d}&\dots &  \binom{c+1}{m-2d+1}&\\
 \cline{1-4}
  \multicolumn{2}{c}{}
\end{array}\\
=q_0(m+1,n,d).
\end{array}$$
 \end{proof}

We are now ready to prove Theorem \ref{main}.

\bigskip

\noindent {\it Proof of Theorem~\ref{main}.}
When $\alpha=\beta$, both sides of the expression in
Theorem~\ref{main} vanish.

\noindent Assume now that $\alpha\ne \beta$, and that $\chara(\K)=0$,
or $\chara(\K)\geq m+n-d$. Thanks to
Propositions~\ref{princ} and \ref{mainHd}, both $\Sres_d((x-\alpha)^m,(x-\beta)^n)$ and $h_d(\alpha,\beta,m,n)$ are non-zero polynomials of
degree exactly $d$.  Recall that we have set  $c=m+n-2d-1.$ Proposition~\ref{hd} and Lemma~\ref{lutil} with $\mu=1/\lambda$ then imply that

\begin{equation}\label{cd}\Sres_d((x-\alpha)^m,(x-\beta)^n)=\mu \cdot h_d(\alpha,\beta,m,n) \end{equation}
with \begin{align*}\mu &=\frac{\PSres_d((x-\alpha)^m,(x-\beta)^n)}{\coeff_{x^d}(h_d(\alpha,\beta,m,n))} \\
&= \frac{ (\alpha-\beta)^{(m-d)(n-d)}\,\prod_{i=1}^{d}\frac{(i-1)!\,(c+i)!}{(m-i)!(n-i)!}}{(-1)^{\binom{d}{2}}(\alpha-\beta)^c\prod_{i=1}^{ d}\frac{(i-1)!\,(c+i)!}{(m-i)!(n-i)!} }\\
&=(-1)^{\binom{d}{2}}(\alpha-\beta)^{(m-d)(n-d)-c}. \end{align*}
To prove these equalities, we use the identities shown in
Propositions~\ref{princ} and~\ref{mainHd}. The final identity for $\mu$ also holds when $d=0$. Plugging the expression of $h_d$
given in \eqref{hdd} in the identity \eqref{cd}, we deduce Theorem~\ref{main}
in this case.

 \smallskip \noindent In the general case, we use the fact that
Theorem~\ref{main} holds for $(x-u_\alpha)^m$ and $(x-u_\beta)^n$ in
$\K\supset\Q(u_\alpha,u_\beta),$ where $u_\alpha,\,u_\beta$ are indeterminates
over $\Q.$ As subresultants are defined via the determinant \eqref{srs}, and
in this case they actually belong to $\Z[u_\alpha, u_\beta][x],$ the
expression \eqref{main} holds after specializing $u_\alpha\mapsto
\alpha,\,u_\beta\mapsto\beta,$ and the standard ring homomorphism $\Z\to \K$.
This concludes the proof of Theorem~\ref{main}.

\hfill\mbox{$\Box$}

\bigskip
\bibliographystyle{alpha}
\def\cprime{$'$} \def\cprime{$'$} \def\cprime{$'$}

\end{document}